\numberwithin{equation}{section}
\theoremstyle{plain}
\newtheorem{theorem}{Theorem}[section]
\newtheorem{lemma}[theorem]{Lemma}
\newtheorem{proposition}[theorem]{Proposition}
\theoremstyle{definition}
\newtheorem{definition}[theorem]{Definition}
\newtheorem{remark}[theorem]{Remark}
\newtheorem{example}[theorem]{Example}
\newcommand{\R}{\mathbb R}
\newcommand{\N}{\mathbb N}
\renewcommand{\H}{\mathbb H}
\newcommand{\G}{\mathbb G}
\newcommand{\galg}{\mathfrak g}
\newcommand{\spn}{\mathrm{span}}
\newcommand{\rank}{\mathrm{rank}}
\newcommand{\abn}{\mathrm{Abn}}
\newcommand{\F}{\mathbb{F}}
\newcommand{\sk}{\mathrm{Skew}}
\newcommand{\de}{d}
\newcommand{\supp}{\mathrm{spt}}
\newcommand{\gr}{\mathrm{Gr}}
\newcommand{\g}{\mathfrak{g}}
\newcommand{\f}{\mathfrak{f}}
\newcommand{\akw}{\mathcal{A}_{k,W}}
\newcommand{\akwm}{\mathcal{A}_{k,W}^m}
\newcommand{\bigzero}{\mbox{\normalfont\Large\bfseries 0}}
\newcommand{\mfk}{\mathfrak}
\newcommand{\IM}{\mathrm{Im} \,}
\DeclareMathOperator*{\Span}{span}
\author{Francesco Boarotto}
\address{Dipartimento di Matematica ``T. Levi-Civita'', via Trieste 63, 35121 Padova, Italy.}
\email{francesco.boarotto@gmail.com}
\author{Luca Nalon}
\address{D\'epartement de Math\'ematiques, Ch. du mus\'ee 23, 1700 Fribourg (CH)}
\email{luca.nalon@unifr.ch}
\thanks{L.N. is partially supported by the Swiss National Science Foundation
	(grant 200021-204501 \emph{Regularity of sub-Riemannian geodesics and
		applications})
	and by the European Research Council (ERC Starting Grant 713998 GeoMeG
	\emph{Geometry of Metric Groups}).}
\author{Davide Vittone}
\address{Dipartimento di Matematica ``T. Levi-Civita'', via Trieste 63, 35121 Padova, Italy.}
\email{davide.vittone@unipd.it}
\thanks{D.V. is supported by University of Padova and by GNAMPA of INdAM (Italy).}
\subjclass[2010]{53C17, 58K05, 22E25}
\keywords{}
\begin{document}


\title{The Sard problem in step 2 and in filiform Carnot groups}

\begin{abstract}
We study the Sard problem for the endpoint map in some well-known classes of Carnot groups. Our first main result deals with step 2 Carnot groups, where we provide lower bounds (depending only on the algebra of the group) on the codimension of the abnormal set; it turns out that our bound is always at least 3, which improves the result proved in~\cite{LDMOPV} and settles  a question emerged in~\cite{OV}. In our second main result we characterize the abnormal set in filiform groups and show that it is either a horizontal line, or a 3-dimensional algebraic variety.
\end{abstract}

\maketitle

\section{Introduction}
One of the main open questions in sub-Riemannian geometry is arguably the {\em Sard problem} for the endpoint map, cf. \cite{AgrSomeOpen} or \cite[Section 10.2]{Montgomery}: in fact, the  problem is ubiquitous in sub-Riemannian geometry, as it has implications on the regularity of geodesics, the regularity of the distance and of its spheres, the heat diffusion, the analytic-hypoellipticity of sub-Laplacians, etc.
The Sard problem asks whether the {\em abnormal set}, i.e. the set of critical values of the {\em endpoint map}, is negligible or not; we refer to Section~\ref{sec_prelim} for precise definitions. Despite such a simple formulation, only very partial results are 
known~\cite{AgraAny,BelottoFPR,BelottoRifford,RiffTre,ZZ}
even in  settings with a rich structure such as {\em Carnot groups}~\cite{Gent_Hor,BV,CJT06,LLMV_GAFA,LLMV_CAG,LDMOPV,OV}.
The goal of this note is  to provide a contribution in two meaningful classes of Carnot groups: those with nilpotency step 2, and filiform ones.

The Sard problem in step 2 Carnot groups has already been answered affirmatively in~\cite{LDMOPV}; however, the question of getting  finer estimates on the size of the abnormal set was left open. In fact, in~\cite{OV} it was conjectured that the abnormal set $\abn_\G$ in a Carnot group $\G$ of step 2 has codimension at least 3. The conjecture is true in {\em free} Carnot groups of step 2, as proved in~\cite{LDMOPV}, and in some classes of step 2 Carnot groups~\cite{OV} including all groups of topological dimension up to 7. Our first main result, Theorem~\ref{thm_step2} below, is a full, positive answer to this question: we prove in fact that $\abn_\G$ has codimension at least 3 in every step 2 Carnot group $\G$. Actually, our proof also provides,  for the codimension of $\abn_\G$, a lower bound that is purely algebraic, depending only on the   algebraic structure of $\G$, and which could possibly be greater than 3. 

Recall that the stratified Lie algebra $\g=\g_1\oplus\g_2$ of $\G$ can always be seen as the quotient of a free stratified Lie algebra $\mfk f_r$ of step 2 and rank $r:=\dim\g_1$. In turn, $\mfk f_r$ can be identified with $V\oplus\bigwedge^2V$, where the vector space $V$ is $V:=\g_1$ and $\bigwedge^2V$ is the $r(r-1)/2$-dimensional space of 2-vectors on $V$. The second layer $\g_2$ can be identified with the quotient $\bigwedge^2 V/W$ by a vector subspace $W$ of $\bigwedge^2V$; eventually, we identify $\G$ and $\g$ by the exponential map. With this notation we can state our first main result.

\begin{theorem}\label{thm_step2}
	Let $\G$ be a Carnot group of step $2$ given by
	\begin{equation*}
		\G = V \oplus \frac{\bigwedge^2 V}{W}  
	\end{equation*}
for some $W \le \bigwedge^2 V$. Let $W^{\perp_2}$ be the orthogonal to $W$ in $\bigwedge^2 V$ with respect to an adequate scalar product and let $\tilde{k} := \min \set{ \rank(\omega) \, | \, \omega \in W^{\perp_2} \setminus \set{0}}$. Then $\abn_\G$ is contained in an algebraic variety of codimension $2\tilde{k}+1$. In particular, the abnormal set $\abn_\G$ has codimension at least $3$.
\end{theorem}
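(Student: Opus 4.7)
My strategy is to combine a Pontryagin-style characterization of abnormal controls in step $2$ with a dimension count on an incidence variety parametrising abnormal endpoints by their Lagrange multipliers.

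In step $2$ the endpoint map takes the explicit polynomial form
\[
E(u)=\Bigl(\int_{0}^{1}u\,dt,\;\tfrac12\,\pi\Bigl(\int_{0}^{1}x(t)\wedge u(t)\,dt\Bigr)\Bigr),\qquad x(t):=\int_{0}^{t}u(s)\,ds,
\]
with $\pi:\bigwedge^{2}V\to\bigwedge^{2}V/W$ the canonical projection. A direct computation of $dE_u$, together with the identification of covectors on $\g_{2}$ with $2$-forms in $W^{\perp_2}$ via the scalar product, shows that $u$ is critical if and only if there exists $\omega\in W^{\perp_2}\setminus\{0\}$ with $u(t)\in\ker\omega$ for a.e. $t$. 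Since the endpoint map restricted to controls valued in $\ker\omega$ is surjective onto the reachable set of the sub-Carnot-group generated by $\ker\omega$, the abnormal endpoints carrying multiplier $\omega$ fill out exactly $R_{\omega}:=\ker\omega\oplus\pi(\bigwedge^{2}\ker\omega)$, and therefore $\abn_{\G}=\bigcup_{[\omega]\in\mathbb P(W^{\perp_2})}R_{\omega}$.

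I would realise this union as the image, under the natural projection to $\G$, of the algebraic incidence variety
\[
Z:=\bigl\{([\omega],(x,z))\in\mathbb P(W^{\perp_2})\times\G\,:\,\omega(x,\cdot)=0,\;z\in\pi(\textstyle\bigwedge^{2}\ker\omega)\bigr\};
\]
its Zariski closure provides the algebraic variety sought in the statement, and the task reduces to proving $\dim Z\le\dim\G-(2\tilde{k}+1)$. To do so I stratify $\mathbb P(W^{\perp_2})$ by rank, setting $T_{k}:=\{[\omega]:\rank\omega=2k\}$ for $k\ge\tilde{k}$. The fiber over $[\omega]\in T_{k}$ has dimension at most $(r-2k)+\binom{r-2k}{2}-d_{\omega}$ with $d_{\omega}:=\dim(\bigwedge^{2}\ker\omega\cap W)$, while the classical Pfaffian variety $X_{k}:=\{\omega\in\bigwedge^{2}V^{*}:\rank\omega\le 2k\}$ has codimension $\binom{r-2k}{2}$ in $\bigwedge^{2}V^{*}$. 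In the transverse regime one then obtains $\dim T_{k}\le\dim W^{\perp_2}-1-\binom{r-2k}{2}$, and summing these two estimates with $\dim\G=r+\binom{r}{2}-\dim W$ yields exactly codimension $2\tilde{k}+1$ in $\G$; specializing to $W=\{0\}$, $\tilde{k}=1$ recovers the free-group bound of~\cite{LDMOPV}.

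The principal obstacle is the non-transverse case, with extreme instance $W^{\perp_2}\subseteq X_{\tilde{k}}$, where the upper bound on $\dim T_{\tilde{k}}$ above collapses. Here the stratification estimate must be complemented by a compensating upper bound on the fiber size. The key ingredient I would use is a rank-additivity identity: for any $w\in\bigwedge^{2}\ker\omega$ viewed as a $2$-form via the scalar product, a short computation with bases adapted to $\ker\omega$ and $(\ker\omega)^{\perp}$ gives $\rank(\omega+tw)=\rank\omega+\rank w$ for all $t\ne 0$, so if $w$ also lies in $W^{\perp_2}\setminus\{0\}$ then the minimality of $\tilde{k}$ forces $\rank w\ge 2\tilde{k}$ while also $\rank w\le r-2\tilde{k}$. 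This rigidity restricts how much of $\bigwedge^{2}\ker\omega$ can sit in $W^{\perp_2}$ and should, by a projection argument along the decomposition $\bigwedge^{2}V=W\oplus W^{\perp_2}$, yield a lower bound on $d_{\omega}$ large enough to recover the desired codimension. Making this trade-off between $\dim T_{k}$ and the fiber dimension balance out uniformly to $2\tilde{k}+1$ across all strata is, I expect, the most delicate technical point of the argument.
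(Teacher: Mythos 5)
Your overall architecture coincides with the paper's: both reduce to the description of $\abn_\G$ as a union of the sets $\pi\bigl(\ker\omega\oplus\bigwedge^2\ker\omega\bigr)$ (in the paper's notation $\ker\omega=\supp(\omega)^{\perp_1}$) over $\omega\in W^{\perp_2}\setminus\{0\}$, stratify by the rank of $\omega$, and run a base-plus-fiber dimension count; your projectivization of $W^{\perp_2}$ plays the role of the paper's observation that $\mathcal{B}(\lambda\omega)=\mathcal{B}(\omega)$. In the transverse regime your numerology is correct and in fact gives codimension $2k+1+d_\omega$.

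The gap is exactly where you locate it, but your proposed repair does not close it. The quantity measuring the failure of transversality of $W^{\perp_2}$ to the rank stratum at a rank-$2k$ point $\omega$ is $\dim\bigl(W\cap\bigwedge^2\ker\omega\bigr)=d_\omega$, not anything involving $W^{\perp_2}\cap\bigwedge^2\ker\omega$: the conormal space to the stratum $\{\rank=2k\}$ at $\omega$ is precisely $\bigwedge^2\ker\omega$, so the corank of the linear system $\langle\cdot\,,\theta\rangle=0$, $\theta\in W$, restricted to the tangent space of the stratum equals $\dim\bigl(W\cap\bigwedge^2\ker\omega\bigr)$. Hence the excess of the base stratum over its transverse expected dimension is exactly $d_\omega$ --- the same $d_\omega$ by which the fiber drops --- and the two cancel identically, with no genericity, no rank-additivity, and no lower bound on $d_\omega$ required. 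Your rank-additivity identity is true but constrains the wrong intersection: knowing that every nonzero $w\in W^{\perp_2}\cap\bigwedge^2\ker\omega$ has $2$-vector rank at least $\tilde k$ restricts that space, but yields no lower bound on $\dim\bigl(W\cap\bigwedge^2\ker\omega\bigr)$ (both intersections can vanish simultaneously while $\bigwedge^2\ker\omega$ is large), so the ``projection argument'' you invoke cannot be made to work as stated. The paper's Proposition~\ref{prop_EkW} implements the exact cancellation above: Lemma~\ref{para} gives a rational local parametrization of the rank stratum, and a direct computation shows that the differential of the defining equations at $\omega_0$ has rank exactly $n-m$ with $m=\dim\bigl(W\cap\bigwedge^2\supp(\omega_0)^{\perp_1}\bigr)$. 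Replacing your last paragraph with this computation turns your outline into a complete proof.
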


We refer to Proposition~\ref{prop_adeguato} for the definition of {\em adequate} scalar product. 
The bound $2\tilde k+1$ on the codimension of $\abn_\G$ is not always optimal, see Examples~\ref{exam_1} and~\ref{exam_2}; however, in some cases it provides the exact estimate, as  for instance in the well-known case of {\em Heisenberg groups} (see Example~\ref{exam_Heis}) where the abnormal set is the singleton $\{0\}$ and the optimal bound is indeed greater than 3.

The idea behind the proof of Theorem~\ref{thm_step2} can be described as follows. There is a natural projection map $\pi:\mathbb F_r\to\G$ from the free Carnot group $\F_r$ associated with $\mfk f_r$ onto $\G$. Each horizontal curve  from the identity  in $\G$ can be uniquely lifted to a horizontal curve  from the identity in $\mathbb F_r$. Abnormal curves in $\G$ are projections of abnormal curves  in $\mathbb F_r$, but the converse is not always true: in step 2 Carnot groups one is able to characterize those abnormal curves in $\mathbb F_r$ that project to abnormal curves in $\G$ and this leads to the precise description of $\abn_\G$ contained in formula~\eqref{eq_abnGProp2.5}  (see also~\cite[Proposition~2.5]{OV}). In Proposition~\ref{prop_EkW} we use this description to study $\abn_\G$ and eventually prove Theorem~\ref{thm_step2}.

Our second main result settles the Sard problem in another well-studied class of Carnot groups, the one of {\em filiform}  groups, where the question was left open. Filiform groups are Carnot groups whose stratified Lie algebra $\g=\g_1\oplus\dots\oplus\g_s$ satisfies 
\[
\dim\g_1=2,\qquad\dim \g_2=\dots=\dim \g_s=1.
\]
In particular, $\dim\G=s+1$. 
As proved in~\cite{Vergne}, filiform groups fall into two subclasses: {\em type I} filiform groups and {\em type II} filiform groups. While type I filiform groups can be of any step $s\geq 2$,  type II ones  always have an odd nilpotency step; since  type I and type II filiform groups are isomorphic when $s=3$ (both coinciding with the well-known {\em Engel groups}), we adopt the convention that  $s\geq 5$ for type II filiform groups. We refer to \S~\ref{subsec_filiformi} for precise definitions and we now pass to the statement of our second main result.

\begin{theorem}\label{thm_filiformi}
Let $\G$ be a filiform group of step $s\geq 3$.

\begin{itemize}
\item[(i)] If $\G$ is a type I filiform group, then $\abn_\G$ is a horizontal line.
\item[(ii)] If $\G$ is a type II filiform group, then $\abn_\G$ is an algebraic variety of dimension 3.
\end{itemize}
\end{theorem}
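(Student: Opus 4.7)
My plan is to use Pontryagin's maximum principle in its Hamiltonian form. Fix a graded basis $X_1, \dots, X_{s+1}$ of $\g$ adapted to the stratification, with $X_1, X_2$ spanning $\g_1$ and $X_{k+1}$ spanning $\g_k$ for $k \geq 2$. In the type I case the only nonzero brackets are $[X_1, X_k] = X_{k+1}$ for $2 \le k \le s$, while in the type II case additional brackets among the higher-rank generators appear, and in particular there is a nonzero bracket $[X_2, X_s] = c\, X_{s+1}$ landing in the top layer. A horizontal curve $\gamma:[0,1]\to\G$ driven by $L^2$ controls $(u_1, u_2)$ is abnormal if and only if there exists a nonzero absolutely continuous covector $\lambda$ along $\gamma$ such that, setting $h_i(t) := \langle \lambda(t), X_i(\gamma(t))\rangle$, one has $h_1 \equiv h_2 \equiv 0$; differentiating along the flow then yields the cascade $\dot h_i = u_1\, h_{[X_1, X_i]} + u_2\, h_{[X_2, X_i]}$ (up to sign).

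For part (i), differentiating $h_1 \equiv h_2 \equiv 0$ gives $u_2 h_3 \equiv u_1 h_3 \equiv 0$, so $h_3 \equiv 0$ on the support of $(u_1, u_2)$. Since $[X_2, X_k] = 0$ for every $k \geq 2$ in type I, iterating yields $u_1 h_{k+1} \equiv 0$ at each successive layer; combining this with ODE uniqueness for $\lambda$ shows that whenever $u_1$ fails to vanish almost everywhere the whole covector degenerates to zero, a contradiction. Hence $u_1 \equiv 0$, $\gamma$ is a horizontal segment in the direction of $X_2$, and its endpoint lies on the horizontal line $\exp(\R X_2)$. The converse is immediate, since any covector annihilating $\g_1$ serves as an extremal multiplier on such a curve.

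For part (ii), the type II cascade deviates from type I only at the last step, because $[X_2, X_k] = 0$ for $2 \le k \le s-1$ while $[X_2, X_s] = c\, X_{s+1}$ is nonzero; note also that $h_{s+1}$ is conserved since $\g_s$ is central. Running the cascade shows that on the set where $u_1 \not\equiv 0$ one has $h_3 \equiv \cdots \equiv h_s \equiv 0$ and then the relation $(u_1 + c\, u_2)\, h_{s+1} \equiv 0$; since $h_{s+1}$ is a nonzero constant (otherwise $\lambda \equiv 0$), this forces $u_1 \equiv -c\, u_2$ on the support of $u_1$. The requirement that the covector be continuous across switching points between the $u_1 \equiv 0$ and $u_1 \equiv -c u_2$ regimes, combined with the ODE $\dot h_s = c\, u_2\, h_{s+1}$ in the former regime, imposes the integral constraint that $u_2$ has zero mean over each interval where $u_1 \equiv 0$. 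My plan is then to integrate the Carnot ODE along the resulting admissible concatenations, express the endpoint coordinates as polynomials in the iterated integrals of $(u_1, u_2)$, and use the above pointwise and integral constraints to collapse these polynomials to a finite list of independent parameters, from which the defining polynomial equations of $\abn_\G$ can be read off.

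The hardest step will be the final dimension count. Naively, the space of admissible controls is infinite-dimensional, but the endpoint map factors through a finite-dimensional quotient dictated by the Baker--Campbell--Hausdorff formula in the filiform grading together with the zero-mean constraint on the $u_1 \equiv 0$ intervals. Showing that this quotient has dimension exactly $3$, describing its image as an algebraic variety, and excluding spurious accumulation from complicated switching patterns, is the delicate core of part (ii). It is here that the specific filiform structure, the top-layer location of the extra bracket, and the nilpotency of the algebra are expected to conspire to produce precisely the value $3$.
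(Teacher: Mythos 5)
Your strategy is essentially the paper's: the Hamiltonian functions $h_i$ along an abnormal extremal and the cascade $\dot h_i=u_1h_{[X_1,X_i]}+u_2h_{[X_2,X_i]}$ are precisely the functions $A_i^\lambda$ that the paper extracts from the chronological expansion of Proposition~\ref{prop_prop11BV}, and your argument for part (i) reproduces the proof of Proposition~\ref{prop_filiformiI} (with the caveat that ``ODE uniqueness for $\lambda$'' is not the right tool: the relations $h_k=0$ hold only almost everywhere on a measurable set, so one needs the Lebesgue-point/no-isolated-point argument the paper carries out). For part (ii), your normalization of the brackets --- keeping $[X_1,X_s]=X_{s+1}$ and adding $[X_2,X_s]=cX_{s+1}$ --- is a legitimate presentation isomorphic to the paper's (where instead $[X_1,X_s]=0$ and $[X_2,X_s]=X_{s+1}$), and the necessary conditions you obtain, namely $u_1=-c\,u_2$ on $\supp(u_1)$ plus the integral constraint, do translate into conditions (i)--(ii) of Proposition~\ref{prop_filiformiII} under the corresponding change of basis in $\galg_1$. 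One local slip: $h_{s+1}=\lambda_{s+1}=0$ does not force $\lambda\equiv0$ outright (e.g.\ $\lambda=X_4^*$ annihilates the extremals over $u_1\equiv0$); the paper treats $\lambda_{s+1}=0$ as a separate case and shows it forces $u_1=0$ a.e.

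The genuine gap is that for part (ii) you only establish \emph{necessary} conditions on singular controls, which yields $\abn_\G\subseteq\{\exp(aX_2)\exp(bX_1)\exp(cX_2)\}$ and hence only $\dim\abn_\G\le3$. To get dimension exactly $3$ you must prove the converse: every control satisfying your pointwise and integral constraints \emph{is} singular. This is not automatic and is the content of Step~2 of the paper's proof of Proposition~\ref{prop_filiformiII}, where the covector $\lambda_s=-a$, $\lambda_{s+1}=1$ (all other components zero) is exhibited and the two pairings $A_1^\lambda$ and $B_1^\lambda$ (corresponding to $Y=X_1$ and $Y=X_2$ in \eqref{eq_alt_char_im}) are shown to vanish identically. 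Moreover, the final step you flag as ``the delicate core'' --- collapsing the endpoint map to three parameters and ruling out ``spurious accumulation from complicated switching patterns'' --- is in fact immediate once the control characterization is in hand and is not where the difficulty lies: conditions (i)--(ii) force the horizontal projection of the curve to be a concatenation of axis-parallel segments in $\galg_1$ with all the $X_1$-parallel pieces lying on the single line $\{x_2=a\}$, so by the group law the endpoint is $\exp(aX_2)\exp(bX_1)\exp(cX_2)$ for some $a,b,c\in\R$, exactly as in Remark~\ref{rem_struttura_abnormali_filiformiII}; no infinite-dimensional quotient or BCH computation is needed. As written, your proposal proves containment in a $3$-dimensional variety but not the reverse inclusion, so the theorem as stated does not yet follow.
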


Theorem~\ref{thm_filiformi} follows from Propositions~\ref{prop_filiformiI} and~\ref{prop_filiformiII} together with Remark~\ref{rem_struttura_abnormali_filiformiII}; in these results the singular controls and the associated abnormal curves are also  characterized.
The statement of Theorem~\ref{thm_filiformi} does not include the non-interesting cases $s=1$, when $\G=\R^2$ and $\abn_\G=\emptyset$, and $s=2$, when $\G$ is the first Heisenberg group.

Theorems~\ref{thm_step2} and~\ref{thm_filiformi} show that  the codimension of the abnormal set is at least 3 both in step 2 and in filiform Carnot groups; actually, to our best knowledge there is currently no example of a Carnot group (nor of an {\em equiregular} sub-Riemannian manifold) where the abnormal set has codimension less than 3. This might lead to formulate the following ``strong'' Sard conjecture: is it true that, in Carnot groups and/or in equiregular manifolds, the codimension of the abnormal set  is at least 3? Observe that there are counterexamples when the equiregularity assumption is not in force, the most noticeable being the Martinet structure in $\R^3$ \cite{Mar70}, where the abnormal set is contained in a plane and has therefore codimension 1. At any rate, the answer to this question seems for the moment out of reach.\medskip

{\em Acknowledgments.} We are grateful to E.~Le~Donne for suggesting us to study the Sard problem in filiform groups as well as for several interesting discussions.

\section{Preliminaries}\label{sec_prelim}
A {\em Carnot group} $\G$ of rank $r$ and step $s$ is a connected, simply connected and nilpotent Lie group whose Lie algebra $\mfk{g}$, here identified with the tangent at the group identity $e$, admits a stratification of the form:
	\[
		 \mfk{g}=\mfk{g}_1\oplus \dots \oplus \mfk{g}_s,
	\]
	with $\mfk{g}_{i+1}=[\mfk{g},\mfk{g}_i]$ for $1\le i\le s-1$, $[\mfk g,\mfk g_s]=\{0\}$ and $\dim(\mfk{g}_1)=r$. 
The exponential map $\exp:\galg\to\G$ is a diffeomorphism.

	Denoting by $L_g$ the left-translation on $\G$ by an element $g\in\G$, we consider the {\em endpoint map}
	\[
		\begin{aligned}
			F: L^1([0,1],\mfk{g}_1)&\to \G,\\
				u&\mapsto \gamma_u(1),
		\end{aligned}
	\]
	where we denoted by $\gamma_u:[0,1]\to \G$ the absolutely continuous curve issuing from  $e$, whose derivative is given by $(dL_{\gamma_u(t)})_eu(t)$ for a.e. $t\in [0,1]$. Any such curve $\gamma_u$ is called {\em horizontal}.

The following  Proposition~\ref{prop_prop11BV} was proved in \cite[Proposition~11]{BV}. Let us state some notation. We fix a basis $X_1,\dots,X_r$ of $\mfk g_1$, so that we can identify $\mfk g_1\equiv\R^r$ by $\R^r\ni u\leftrightarrows X_u\in\mfk g_1$ where
\[
X_u:=u_1X_1+\dots+u_rX_r.
\]
As customary, for $X,Y\in\mfk g$ we write $\mathrm{ad}_X(Y):=[X,Y]$, while for $p\in\G$ we denote by $R_p:\G\to\G$ the associated right-translation $R_p(q)=qp$. Eventually, given $t\geq0$ and an integer $j\geq 1$ we introduce the $j$-dimensional simplex $\Sigma_j(t)$ of side $t$ by
\[
\Sigma_j(t):=\{(\tau_1,\dots,\tau_j)\in\R^j\mid 0\le\tau_j\le\dots\le\tau_1\le t\}
\]
With this notation one has the following.

\begin{proposition}\label{prop_prop11BV}
Let $u\in L^1([0,1],\galg_1)$ be a control; then 
\begin{equation}\label{eq_alt_char_im}
\IM(d_u F)=d_eR_{\gamma_u(1)}\left(\Span_{Y\in \mfk{g}_1,t\in [0,1]}\left\{ Y+\sum_{j=1}^{s-1}\int_{\Sigma_j(t)}\left(\mathrm{ad}X_{u(\tau_{j})}\circ\dots\circ  \mathrm{ad}X_{u(\tau_1)}  \right)Y\;d\tau_{j}\dots d\tau_1 \right\}\right).
\end{equation}
In particular
\begin{equation}\label{eq_inv}
d_eR_{\gamma_u(1)}(\mfk g_1)\subset\IM(d_u F).
\end{equation}
\end{proposition}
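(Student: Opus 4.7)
The plan is to derive an explicit integral representation for \(d_u F(v)\) by linearizing the defining ODE of \(\gamma_u\), and then to expand the resulting adjoint action via Chen iterated integrals, which in the nilpotent setting terminate after \(s-1\) steps. Concretely, I set \(\gamma_\epsilon:=\gamma_{u+\epsilon v}\), \(\delta(t):=\partial_\epsilon\gamma_\epsilon(t)|_{\epsilon=0}\), and right-trivialize via \(\xi(t):=(dR_{\gamma_u(t)^{-1}})\delta(t)\in\mfk g\). Differentiating the ODE \(\dot\gamma_\epsilon=(dL_{\gamma_\epsilon})_e X_{u+\epsilon v}\) in \(\epsilon\) at \(\epsilon=0\) (most easily checked through a faithful matrix representation of the nilpotent \(\G\), or via exponential coordinates) yields the linear equation
\[
\dot\xi(t)=\mathrm{Ad}_{\gamma_u(t)}X_{v(t)},\qquad \xi(0)=0,
\]
and therefore
\[
d_u F(v)=(dR_{\gamma_u(1)})_e\int_0^1\mathrm{Ad}_{\gamma_u(t)}X_{v(t)}\,dt.
\]

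Next, I would expand \(A(t):=\mathrm{Ad}_{\gamma_u(t)}\in\mathrm{End}(\mfk g)\). It satisfies the linear ODE \(\dot A(t)=A(t)\circ\mathrm{ad}_{X_{u(t)}}\) with \(A(0)=\id\), so Picard iteration produces the Volterra/Chen expansion
\[
\mathrm{Ad}_{\gamma_u(t)}Y=Y+\sum_{j\ge 1}\int_{\Sigma_j(t)}\mathrm{ad}_{X_{u(\tau_j)}}\circ\dots\circ\mathrm{ad}_{X_{u(\tau_1)}}Y\,d\tau_j\dots d\tau_1.
\]
For \(Y\in\mfk g_1\) the \(j\)-fold nested bracket lies in \(\mfk g_{j+1}\) and therefore vanishes whenever \(j\ge s\); the series thus truncates at \(j=s-1\), matching exactly the bracketed expression appearing in~\eqref{eq_alt_char_im}.

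Combining the two steps, the right-trivialization of \(\IM(d_u F)\) coincides with \(\left\{\int_0^1\mathrm{Ad}_{\gamma_u(t)}X_{v(t)}\,dt:v\in L^1([0,1],\mfk g_1)\right\}\). The inclusion ``\(\supseteq\)'' in~\eqref{eq_alt_char_im} is established via needle variations: for fixed \(t\in[0,1)\) and \(Y\in\mfk g_1\), the control \(v_\varepsilon:=\tfrac1\varepsilon Y\,\mathbf 1_{[t,t+\varepsilon]}\) satisfies \(d_u F(v_\varepsilon)\to(dR_{\gamma_u(1)})_e\mathrm{Ad}_{\gamma_u(t)}Y\) as \(\varepsilon\to 0^+\), and this limit lies in \(\IM(d_u F)\) since the image is finite-dimensional, hence closed. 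For ``\(\subseteq\)'', approximating an arbitrary \(v\in L^1\) by step functions expresses the integral as a finite linear combination of vectors of the form \(\mathrm{Ad}_{\gamma_u(t)}Y\); passing to the \(L^1\)-limit within the (finite-dimensional, hence closed) span settles this direction. Finally, \eqref{eq_inv} is obtained by specializing \(t=0\) in the spanning formula: each \(\Sigma_j(0)\) is negligible and the spanning vector collapses to \(Y\in\mfk g_1\), giving \((dR_{\gamma_u(1)})_e(\mfk g_1)\subseteq\IM(d_u F)\). The main care is required in the Chen expansion, where orderings of the simplex variables and of the composed \(\mathrm{ad}\)'s must be tracked precisely to match~\eqref{eq_alt_char_im}; the remaining arguments are standard Lie-group manipulations.
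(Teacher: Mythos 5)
Your argument is correct. Note that the paper does not actually prove this proposition: it is quoted verbatim from \cite[Proposition~11]{BV}, so there is no internal proof to compare against; your derivation (right-trivialized linearization of the control ODE giving $d_uF(v)=(dR_{\gamma_u(1)})_e\int_0^1\mathrm{Ad}_{\gamma_u(t)}X_{v(t)}\,dt$, followed by the Volterra/Chen expansion of $\mathrm{Ad}_{\gamma_u(t)}$, which truncates at order $s-1$ by nilpotency, and the needle-variation/closed-span argument to identify the image with the stated span) is the standard route and is essentially the one carried out in the cited reference. The orderings of the simplex variables and of the composed $\mathrm{ad}$'s in your Picard iteration do match~\eqref{eq_alt_char_im}, and the deduction of~\eqref{eq_inv} by taking $t=0$ agrees with the remark following the proposition in the paper.
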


Statement~\eqref{eq_inv} follows from~\eqref{eq_alt_char_im} by considering $t=0$, see also~\cite[equation~(2.11)]{BV}. Proposition~\ref{prop_prop11BV} is crucial for the study of abnormal curves, that we now introduce.

\begin{definition}\label{defi:abnormal_set}
Let $\G$ be a Carnot group; we say that $u\in L^1([0,1],\galg_1)$ is a {\em singular control} if the differential $d_uF$ of the endpoint map at $u$ is not surjective.
A horizontal curve $\gamma_u$ from the identity $e$ is called {\em singular} (or {\em abnormal}) {\em curve} if the associated control $u$ is singular.
The {\em abnormal set} $\abn_\G\subset\G$ is the set of critical values of $F$, i.e., 
\[
\abn_\G:=\{\gamma_u(1)\mid u\in L^1([0,1],\galg_1)\text{ is a singular control}\}.
\]
In particular, a point $g\in \G$ belongs to $\abn_\G$ if and only if there exists an abnormal  curve  joining $e$ and $g$.
\end{definition}

In the setting of Carnot groups of step 2, Proposition~\ref{prop_prop11BV} allows to characterize abnormal curves in a particularly simple way. Let $\G$ be a fixed Carnot group of step 2 with Lie algebra $\galg=\galg_1 \oplus \galg_2$ and denote by $\pi_1: \galg_1 \oplus \galg_2 \to \galg_1$ the canonical projection onto the first layer. Given an horizontal curve $\gamma$ in $\G$ we define
\begin{equation}\label{eq_PgammaIgamma}
\begin{split}
	P_\gamma &= \spn \Set{\pi_1(\gamma(t)) \, | \, t \in [0,1] }\subset\galg_1 \\
	I_\gamma &= \galg_1 \oplus [P_\gamma,\galg_1] \, .
\end{split}
\end{equation}	
Starting from Proposition~\ref{prop_prop11BV} it is not difficult to realize that $\IM(d_u F)=d_eR_{\gamma_u(1)}(I_\gamma)$; in particular,
	\begin{equation}\label{eq_abnormalIFF}
		\text{$\gamma$ abnormal} 
		\quad \Leftrightarrow \quad I_\gamma \neq \galg 
		\quad \Leftrightarrow \quad  [P_\gamma,\galg_1]\neq\galg_2 \, .
	\end{equation}
For more details see~\cite[Section~2]{OV}. 

The characterization~\eqref{eq_abnormalIFF} of abnormal curves, in turn, allows to find an explicit, purely algebraic formula (see~\eqref{eq_abnGProp2.5} below) for the abnormal set $\abn_\G$: we however postpone its proof in order to fist settle some notation and preliminary material about step 2 Carnot groups.


\section{The Sard problem in step 2 Carnot groups}
In this section we consider Carnot groups of step $s=2$ and a fixed rank $r\geq 2$. 
Recall that a Carnot group is {\em free} if the only relations imposed on its Lie algebra are those generated by the skew-symmetry and Jacobi’s identity. 
Let us denote by  $\F_r$ the free Carnot group  of step 2 and rank $r$ and by $V=(\mathfrak f_r)_1$ the first layer of the associated Lie algebra $\mathfrak f_r$. Clearly, $\f_r=V \oplus [V,V]$ can be identified with $V \oplus \bigwedge^2V$. 

The following proposition is standard and we omit its proof.

\begin{proposition}\label{prop_adeguato}
		Given a free Lie algebra $\f_r=V \oplus \bigwedge^2V$ and a scalar product on $V$, there is a unique way to extend it to a scalar product on $\f_r$ such that for every  orthonormal basis $\set{e_1,\dots,e_r}$ of $V$, the basis \begin{equation*}
			\set{e_1,\dots,e_r} \cup \Set{ e_i \wedge e_j \, | \, i<j}
		\end{equation*}
	is orthonormal. We will refer to such a scalar product as an \emph{adequate scalar product} on $\f_r$.
	\end{proposition}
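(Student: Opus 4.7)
The plan is to construct the extension explicitly and then verify uniqueness. First, since every element of $V$ lies in the span of $\{e_1,\dots,e_r\}$ and every $2$-vector lies in the span of $\{e_i\wedge e_j\}_{i<j}$, the requirement that $\{e_1,\dots,e_r\}\cup\{e_i\wedge e_j\}_{i<j}$ be orthonormal immediately forces $V\perp\bigwedge^2 V$. Hence the extension must split as an orthogonal direct sum, and its restriction to $V$ is fixed to be the given scalar product. It remains only to determine the scalar product on $\bigwedge^2 V$.

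On $\bigwedge^2 V$, I would define the scalar product by the intrinsic formula
\[
\langle v_1\wedge v_2,\,w_1\wedge w_2\rangle:=\langle v_1,w_1\rangle_V\langle v_2,w_2\rangle_V-\langle v_1,w_2\rangle_V\langle v_2,w_1\rangle_V,
\]
extended bilinearly to all of $\bigwedge^2V\times\bigwedge^2V$. Antisymmetry of the right-hand side in $(v_1,v_2)$ and in $(w_1,w_2)$ guarantees compatibility with the wedge relations, so the form is well-defined; and it depends only on the datum $\langle\cdot,\cdot\rangle_V$, not on any auxiliary basis. A direct computation then shows that, for any orthonormal basis $\{e_1,\dots,e_r\}$ of $V$ and any $i<j$, $k<l$,
\[
\langle e_i\wedge e_j,\,e_k\wedge e_l\rangle=\delta_{ik}\delta_{jl}-\delta_{il}\delta_{jk}=\delta_{ik}\delta_{jl},
\]
so $\{e_i\wedge e_j\}_{i<j}$ is orthonormal; positive-definiteness of the bilinear form on $\bigwedge^2 V$ also follows. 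Combined with $V\perp\bigwedge^2 V$, this proves that $\{e_1,\dots,e_r\}\cup\{e_i\wedge e_j\}_{i<j}$ is an orthonormal basis of $\f_r$ for \emph{every} choice of orthonormal basis of $V$.

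Uniqueness is then a triviality: any scalar product on $\f_r$ with the stated property is determined by its values on the basis $\{e_i\}\cup\{e_i\wedge e_j\}_{i<j}$, and these values are prescribed by the orthonormality condition, so such a scalar product coincides with the one constructed above. The only substantive point in the argument is the basis-independence of the construction on $\bigwedge^2 V$, and this is built into the intrinsic formula; equivalently, one may fix one orthonormal basis, define the scalar product by declaring $\{e_i\wedge e_j\}_{i<j}$ orthonormal, and then verify that for any other orthonormal basis $\{f_i\}$, obtained from $\{e_i\}$ via some $A\in O(r)$, the $2\times 2$ minors of $A$ satisfy the identities that follow at once from $A^T A=I$ and make $\{f_i\wedge f_j\}_{i<j}$ orthonormal as well. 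No serious obstacle arises: the whole statement is a repackaging of the standard fact that an inner product on $V$ induces an inner product on $\bigwedge^2 V$ for which the wedges of orthonormal vectors are orthonormal.
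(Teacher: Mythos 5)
Your proof is correct and is exactly the standard argument the paper has in mind (the paper omits the proof as routine): the Gram--determinant formula $\langle v_1\wedge v_2, w_1\wedge w_2\rangle=\det\bigl(\langle v_i,w_j\rangle_V\bigr)$ gives the basis-independent extension, and uniqueness follows since any such scalar product is determined on the basis $\set{e_i}\cup\set{e_i\wedge e_j}$. Nothing is missing.
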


Let now $\G$ be a fixed Carnot group of step 2 and rank $r$; then its Lie algebra $\galg=\galg_1\oplus\galg_2$ can be seen as the quotient of $\f_r$ through a linear subspace $W \le\bigwedge^2 V $, so that
\begin{equation}\label{eq_quoziente}
\galg_1=V
\qquad\text{and}\qquad 
\galg_2=\frac{\bigwedge^2 V}{W}\,.
\end{equation}
Identifying $\G$ and $\galg$  through the exponential map, we have $\G \cong \pi(\f_r)$ where $\pi$ is the quotient map.

The necessity of using the language of multi-vectors motivates the following subsection, where we  fix some terminology and state some preliminary facts. Most of them are well-known, see however~\cite[\S~I.1]{BCGGG}.

\subsection{Some tools from multi-linear algebra}
Let $V$ be a real vector space of dimension $r$. We will refer to
	\begin{equation*}
		\gr(k,V)= \Set{ W \le V \, | \, \dim W = k} 
	\end{equation*}
	as the \emph{rank $k$ Grassmanian of $V$}. It has a structure of real projective algebraic variety, with no singular points. 
	Let us consider the following open subset of $V^k$: \begin{equation*}
		I_k = \Set{ (v_1, \dots, v_k) \in V^k \, | \, \text{$v_1, \dots, v_k$ are linearly independent}} \, ,
	\end{equation*}
	then $\gr(k,V)$ also has a structure of smooth manifold such that the map
	\begin{equation*}
		\spn \colon I_k \to \gr(k,V) \qquad (v_1,\dots,v_k) \mapsto \spn\set{v_1,\dots,v_k}
	\end{equation*}
is smooth. The topology of such manifold is equivalent to the topology of $\gr(k,V)$ as an algebraic variety (with the topology given by the inclusion).
	
%

Given a basis $e_1,\dots,e_r$ of $V$, the second exterior power $\bigwedge^2 V$ of $V$  is generated by $\set{e_i \wedge e_j \, | \, i < j \,}$. 
The {\em rank} of $\omega\in \bigwedge^2 V$ is the smallest integer $k$ such that one can find $v_1,\dots,v_{2k}$ for which $\omega=v_1\wedge v_2+\dots+v_{2k-1}\wedge v_{2k}$; in this case we write $\rank(\omega)=k$ and define the {\em support} (or {\em span}, see also~\cite[Section~5]{AlbertiMarchese}) of $\omega$
\[
\supp(\omega):=\Span\{v_1,\dots,v_{2k}\}.
\]
As a matter of fact, $\supp(\omega) $ is the smallest subspace $W\leq V$ such that $\omega\in \bigwedge^2 W$, hence it is independent  of the choice of $v_1,\dots,v_{2k}$.

Once a basis for $V$ is fixed, the space $\bigwedge^2 V$ can be canonically identified with the space $\sk(r,\R)$ of skew-symmetric $r \times r$ matrices by
	\begin{equation*}
		A \quad \rightleftarrows \quad \omega(A)=\sum_{i<j} A^i_j (e_i \wedge e_j).
	\end{equation*}
Observe that the rank of $A$ (as a matrix) is twice the rank of  $\omega(A)$ (as a 2-vector).
We define 
	\begin{align*}
		\left(\bigwedge\nolimits^{\!2} V\right)_k &:= \Set{ \omega \in \bigwedge\nolimits^{\!2} V \, | \, \rank(\omega)=k \, } \\
		\left(\bigwedge\nolimits^{\!2} V\right)_{\le k} &:= \Set{ \omega \in \bigwedge\nolimits^{\!2} V \, | \, \rank(\omega)\le k \, } \\
		\mathcal{A}_k &:= \Set{ A \in \sk(r,\R) \, | \, \rank(A)=2k \, } \\
		\mathcal{A}_{\le k} &:= \Set{ A \in \sk(r,\R) \, | \, \rank(A)\le 2k \, }
			\end{align*}
and we write down the following identifications:
	\begin{equation}\label{eq_matrici2vettori}
		\left(\bigwedge\nolimits^{\!2} V\right)_k \leftrightarrows \mathcal{A}_k \qquad \left(\bigwedge\nolimits^{\!2} V\right)_{\le k} \leftrightarrows \mathcal{A}_{\le k} \, .
	\end{equation}	
	We observe that $\mathcal{A}_{\le k}$ is an affine algebraic variety, as we impose every minor of order $2k+1$ to be zero, while $\mathcal{A}_k$ is an affine semi-algebraic variety, as we impose every minor of order $2k+1$ to be zero and the sum of all squared minors of order $2k$ to be positive, so that at least one is non-zero.
	
	We will now prove that $\mathcal{A}_k$, endowed with the topology induced by the inclusion in $\sk(r,\R)$, has a structure of smooth manifold of dimension $k(2r-2k-1)$. We will also  provide a useful system of local coordinates. The identification  $\sk(r,\R)\equiv\R^{{r(r-1)}/{2}}$ is understood.
	
	\begin{lemma}\label{para}
		Let $\omega = e_1 \wedge e_2 + \cdots + e_{2k-1}\wedge e_{2k}$ be a 2-vector of rank $k$; let  $\set{e_1,\dots,e_{2k}}$ be completed to a basis $\set{e_1,\dots,e_r}$  of $V$ and consider the associated identifications~\eqref{eq_matrici2vettori}. 
		Then there exists an open cone $U\subset \mathcal{A}_k$ containing $\omega$ and rational functions 
		\begin{equation*}
			\lambda_t, \, a^s_t, \, b^s_t \colon 
			U
			\to \R, \qquad 1 \le t \le k \, , \, 2t+1 \le s \le r
		\end{equation*}
	such that 
	\begin{equation*}
		\omega(A)= \sum_{t=1}^k \left( \lambda_t(A)e_{2t-1} + \sum_{s=2t+1}^r a^s_t(A) e_s\right) \wedge \left( e_{2t} + \sum_{s=2t+1}^r b^s_t(A) e_s
		\right) \, .
	\end{equation*}
	In particular, the function $\rho := (\lambda_t, \, a^s_t, \, b^s_t)_{1 \le t \le k, 2t+1 \le s \le r}:U\to\R^{k(2r-2k-1)}$ is an homeomorphism into its image and $\mathcal{A}_k$ has the structure of $k(2r-2k-1)$-dimensional smooth manifold.
	\end{lemma}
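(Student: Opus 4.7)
The plan is to construct the parameters $\lambda_t,a^s_t,b^s_t$ by a recursive peeling procedure that at stage $t$ reads off a simple rank-one summand from the first two ``active'' rows of the residual skew matrix and subtracts it, obtaining a residual supported in a strictly smaller index set.

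At the center point $A_0$ corresponding to $\omega=e_1\wedge e_2+\cdots+e_{2k-1}\wedge e_{2k}$, the only nonzero entries are $A_0^{2t-1,2t}=1$, so the trivial assignment $\lambda_t=1$, $a^s_t=b^s_t=0$ realizes the normal form. For nearby $A$, I would first define
\[\lambda_1(A):=A^1_2,\qquad b^s_1(A):=A^1_s/A^1_2,\qquad a^s_1(A):=-A^2_s\quad(s\geq 3),\]
which are rational in $A$ wherever $A^1_2\neq 0$. A direct expansion shows that the simple 2-vector
\[\eta_1:=\Bigl(\lambda_1 e_1+\sum_{s\geq 3}a^s_1 e_s\Bigr)\wedge\Bigl(e_2+\sum_{s\geq 3}b^s_1 e_s\Bigr)\]
coincides with $\omega(A)$ on every basis wedge $e_i\wedge e_j$ containing the index $1$ or $2$. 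Hence $\omega(A^{(1)}):=\omega(A)-\eta_1$ is supported in $\bigwedge^2\spn(e_3,\ldots,e_r)$, and we iterate on $A^{(1)}$ to define $\lambda_2,a^s_2,b^s_2$, and so on. By induction, all these functions are rational in the entries of $A$, with denominators that are products of previously defined $\lambda$'s; hence they are well-defined on
\[U:=\mathcal{A}_k\cap\{A\mid \lambda_1(A),\ldots,\lambda_k(A)\neq 0\},\]
which is open in $\mathcal{A}_k$ and contains $A_0$. A further easy induction shows that $A\mapsto \mu A$ acts by $\lambda_t\mapsto\mu\lambda_t$, $a^s_t\mapsto\mu a^s_t$, $b^s_t\mapsto b^s_t$, $A^{(t)}\mapsto\mu A^{(t)}$, so $U$ is a cone.

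The key termination step is to show that on $U$ the final residual $A^{(k)}$ vanishes. After $k$ peelings one has written $A=\sum_{t=1}^k A_t+A^{(k)}$, where each $A_t\in\sk(r,\R)$ has matrix rank $2$ and $A^{(k)}$ is supported in $\spn(e_{2k+1},\ldots,e_r)$. The $2k$ vectors $\{\lambda_t e_{2t-1}+\sum_s a^s_t e_s,\ e_{2t}+\sum_s b^s_t e_s\}_{t=1}^k$ have an upper-triangular structure with pivots $\lambda_1,1,\lambda_2,1,\ldots,\lambda_k,1$; since all pivots are nonzero on $U$, these vectors are linearly independent, their span $W_A$ is a $2k$-plane projecting isomorphically onto $\spn(e_1,\ldots,e_{2k})$, and $W_A\cap\spn(e_{2k+1},\ldots,e_r)=\{0\}$. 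The wedge ranks therefore add: $\rank(A)=2k+\rank(A^{(k)})$, which combined with $A\in\mathcal{A}_k$ forces $A^{(k)}=0$.

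Finally, the inverse of $\rho$ is the polynomial evaluation sending $(\lambda_t,a^s_t,b^s_t)$ to the 2-vector displayed in the statement; by the termination argument this is a two-sided inverse on $U$, so $\rho$ is a bicontinuous bijection onto its image, an open subset of $\R^{k(2r-2k-1)}$. The parameter count $k+2\sum_{t=1}^k(r-2t)=k(2r-2k-1)$ gives the dimension. Covering $\mathcal{A}_k$ by charts of this type (one per basis adapted to the rank-$k$ normal form of each point) and observing that transition maps are compositions of rational maps with a linear change of basis, hence smooth, endows $\mathcal{A}_k$ with the claimed smooth structure. I expect the termination step to be the main obstacle, since it is where one genuinely needs $A\in\mathcal{A}_k$ rather than only the non-vanishing of the $\lambda_t(A)$.
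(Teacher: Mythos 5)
Your proposal is correct and follows essentially the same route as the paper: the same recursive peeling of a rank-one summand using the pivot $A_{12}$ (your $a^s_1=-A^2_s$ equals the paper's $A_{s2}$ by skew-symmetry), the same rational coordinate functions with denominators given by the previous $\lambda$'s, and the same polynomial inverse $\eta$ yielding the homeomorphism and the smooth atlas. The only cosmetic difference is in the termination step: the paper tracks the rank dropping by exactly one at each peeling (via the direct-sum-of-supports observation), whereas you peel $k$ times and invoke the same rank-additivity fact once at the end to kill the residual; the two arguments are equivalent.
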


\begin{proof}
	Let $A_\omega$ be the matrix associated to $\omega$ with respect to the basis $\set{e_1,\dots,e_r}$. For $A \in \mathcal{A}_k$ we consider the map
	\begin{equation*}
		\omega_0(A)=\sum_{i<j} A_{ij}(e_i \wedge e_j) \, .
	\end{equation*}
	Let us consider $U_1= \set{A \in \mathcal{A}_k, \, A_{12} \neq 0}$, it is an open cone and neighbourhood of $\omega$, for $A \in U_1$ we can write
	\begin{equation*}
	\begin{aligned}
		\omega(A)&= \sum_{i<j} A_{ij}(e_i \wedge e_j)\\ 
		&= \left(\sum_i A_{i2}e_i\right) \wedge \left( e_2 + \sum_{j \ge 3} \frac{A_{1j}}{A_{12}}e_j\right) + \sum_{3 \le i < j} \frac{A_{j2}A_{1i}-A_{i2}A_{1j}}{A_{12}}(e_i \wedge e_j) + \sum_{3 \le i < j} A_{ij}(e_i \wedge e_j) \\
		&=\left(\sum_i A_{i2}e_i\right) \wedge \left( e_2 + \sum_{j \ge 3} \frac{A_{1j}}{A_{12}}e_j\right) + \sum_{3 \le i < j} \left(A_{ij}+ \frac{A_{j2}A_{1i}-A_{i2}A_{1j}}{A_{12}}\right)(e_i \wedge e_j) \, .
	\end{aligned}
	\end{equation*}
We define the rational functions
\[
\lambda_1 := A_{12},\qquad a_1^s := A_{s2},\ b_1^s := \frac{A_{1s}}{A_{12}},\text{ for every } s=3,\dots,r
\]
and let
\[
\omega_1(A):= \sum_{3 \le i < j} \left(A_{ij}+ \frac{A_{j2}A_{1i}-A_{i2}A_{1j}}{A_{12}}\right)(e_i \wedge e_j).
\]
We now notice that $\supp(\omega_1(A)) \subseteq \set{e_3,\dots,e_r}$, moreover $\sum_i A_{i2}e_i$ and $e_2 + \sum_{j \ge 3} \frac{A_{1j}}{A_{12}}e_j$ are jointly linear independent from $\set{e_3,\dots,e_n}$: therefore, $\omega_0(A)$ is the sum of $\omega_1(A)$ and a simple (i.e., with rank 1) vector with support in direct sum with $\supp(\omega_1(A))$, and this implies that $\rank \, \omega_1(A)=\rank \, \omega_0(A)-1=k-1$. 
We now consider the open cone
\[
U_2=\Set{A \in U_1\,\Big|\, A_{34}+ \frac{A_{42}A_{13}-A_{32}A_{14}}{A_{12}}\neq0} \, ,
\]
which is also a neighbourhood of $\omega$, and we recall that rational functions are closed under composition.
	
Iterating this process $k$ times, we  obtain  an open cone $U:=U_k\subset\mathcal{A}_k$ containing $\omega$ and rational functions
	\begin{equation*}
		\lambda_t, \, a^s_t, \, b^s_t
		 \colon 
		 U\to \R, \qquad 1 \le t \le k \, , \, 2t+1 \le s \le r
	\end{equation*}
such that
	\begin{equation*}
		\omega(A)=\sum_{t=1}^k \omega_t(A)
		= \sum_{t=1}^k \left( \lambda_t(A)e_{2t-1} + \sum_{s=2t+1}^r a^s_t(A) e_s\right) \wedge \left( e_{2t} + \sum_{s=2t+1}^r b^s_t(A) 
		e_s\right),  \quad A \in U.
	\end{equation*}
	The vector function $\rho := (\lambda_t, \, a^s_t, \, b^s_t)_{1 \le t \le k, 2t+1 \le s \le r}$ is clearly smooth on $U$, since its components are rational functions. 
The function $\eta : \R^{k(2r-2k-1)} \to \mathcal{A}_k$ defined by 
\[
\begin{aligned}
		(\lambda_t, a_t^s, b_t^s)_{\substack{1 \le t \le k,\hfill\\ 2t+1 \le s \le r}} & \stackrel{\eta}{\longmapsto} \displaystyle\sum_{t=1}^k \Big( \lambda_te_{2t-1} + \sum_{s=2t+1}^r a^s_t e_s\Big) \wedge \Big( e_{2t} + \sum_{s=2t+1}^r b^s_t e_s\Big)  = \sum_{i <j} \eta_{ij} (e_i \wedge e_j)
\end{aligned}
\]
coincides with $\rho^{-1}$ on $\rho(U)$, and its coefficients $\eta_{ij}$ are evidently polynomial functions in $(\lambda_t, a_t^s, b_t^s)$.
Thus $\rho$ is an homeomorphism onto its image and $\mathcal{A}_k$ is a topological manifold. Finally, transition maps are compositions of rational function with a polynomial map, hence smooth. Therefore $\mathcal{A}_k$ is also a smooth manifold.
\end{proof}

\begin{remark}
	In the previous construction, $\lambda_t$ and $a_t^s$ are homogeneous rational functions of degree one while $b_t^s$ are homogeneous rational functions of degree zero. 
\end{remark}

\subsection{Estimate of the abnormal set in Carnot groups of step 2}
Hereafter we will always consider a free Lie algebra $\mfk f_r=V\oplus\bigwedge^2V$ of step 2 and rank $r\geq 2$ equipped with an adequate scalar product, as in Proposition~\ref{prop_adeguato}; the symbol $\perp$ will indicate  orthogonality with respect to such scalar product. Recall that the Lie algebra $\galg=\galg_1 \oplus \galg_2$ of a Carnot group $\G$ of step $2$ and rank $r$ can always be written as in~\eqref{eq_quoziente} for some subspace $W\leq \bigwedge^2V$, that is fixed from now on. Let us introduce the  notation: if $A\le V$ and $B\le\bigwedge^2 V$ (so that $A,B$ are subspaces of $\mfk f_r$ too), then 
\begin{align*}
& A^{\perp_1} := A^\perp \cap V\ \subset\ V\equiv \galg_1\\
& B^{\perp_2} := B^\perp \cap\textstyle \bigwedge^2V\ \subset\ \bigwedge^2V.
\end{align*}

Let us identify $\F_r=\mfk f_r$ and $\G=\galg$ by the associated exponential maps and let $\pi:\F_r=\mfk f_r\to\galg=\G$ the quotient map.
Given a horizontal curve $\gamma_0 \colon [0,1] \to \G$ such that $\gamma_0(0)=0$, there exists a unique horizontal curve ${\gamma} \colon [0,1]\to \F_r$ such that $\gamma_0= \pi \circ {\gamma}$ and ${\gamma}(0)=0$. The curves $\gamma_0$ and ${\gamma}$ are associated with the same control $u(t)$; recalling the notation in~\eqref{eq_PgammaIgamma}, one has $\pi_1(\gamma_0)=\pi_1({\gamma})$, thus $P_{\gamma_0}=P_{{\gamma}}$ and $I_{\gamma_0}=\pi(I_{{\gamma}})$. In particular, an abnormal curve $\gamma \colon [0,1] \to \F_r$ is the lift of an abnormal curve on $\G$ if and only if $\pi(I_\gamma)\neq \galg$, i.e., if and only if $I_\gamma + W \neq V \oplus \bigwedge^2V$, that in turn is equivalent to 
	\begin{equation*}
		I_\gamma^\perp \cap W^\perp \neq \set{0} \, .
	\end{equation*}
	We can compute $I_\gamma^\perp=\left( V \oplus [P_\gamma,V]   \right)^\perp=[P_\gamma,V]^{\perp_2}
=[P_\gamma^{\perp_1},P_\gamma^{\perp_1}]=\textstyle\bigwedge^2 \left( P_\gamma^{\perp_1} \right)$, 
where we used the fact that the subspaces $[P_\gamma,V]$ and $[P_\gamma^{\perp_1},P_\gamma^{\perp_1}]$ are orthogonal and complementary in $\bigwedge^2V$.
Since $\gamma\subset P_\gamma\oplus[P_\gamma,P_\gamma]$, we deduce that
\begin{equation}\label{eq_abnGProp2.5}
	\begin{split}
		\abn_\G &= \pi \left(\bigcup \Set{Q^{\perp_1} \oplus [Q^{\perp_1},Q^{\perp_1}] \, | \, \text{$Q \le V$, ${\textstyle\bigwedge^2} Q \cap W^{\perp_2} \neq \set{0}$}}\right) \\ &= \bigcup \Set{\pi \left(Q^{\perp_1} \oplus \textstyle\bigwedge^2 Q^{\perp_1}\right) \, | \, \text{$Q \le V$, ${\textstyle\bigwedge^2} Q \cap W^{\perp_2} \neq \set{0}$}}\, .
	\end{split}
\end{equation}
This formula for $\abn_\G$ is  equivalent to~\cite[Proposition~2.5]{OV}.
If $g \in \abn_\G$, then $g \in \pi \left(Q^{\perp_1} \oplus \textstyle\bigwedge^2Q^{\perp_1}\right)$ for some $Q \le V$ such that $\bigwedge\nolimits^{\!2} Q \cap W^{\perp_2}\neq \set{0}$, therefore there exists $\omega=\omega(g) \in \bigwedge\nolimits^{\!2} Q \cap W^{\perp_2} \neq \set{0}$.  Thus $\supp(\omega) \le Q$ and $Q^{\perp_1}\le \supp(\omega)^{\perp_1}$. We obtained that
\begin{equation}	\label{eq_abnG_EkW}
	\begin{split}
		\abn_\G 
		&= \bigcup \Set{\pi \left(\supp(\omega)^{\perp_1} \oplus \textstyle\bigwedge^2\supp(\omega)^{\perp_1}\right) \, | \, \omega \in W^{\perp_2} \setminus \set{0}} \\ 
		&= \bigcup_{k}\bigcup \Set{\pi \left(\supp(\omega)^{\perp_1} \oplus \textstyle\bigwedge^2\supp(\omega)^{\perp_1}\right) \, | \, \omega \in W^{\perp_2} \setminus \set{0}  , \, \rank(\omega)=k \, }.
	\end{split}
\end{equation}
	
	The following estimate is the most important result of the present section.

\begin{proposition}\label{prop_EkW}
	The set
	\begin{equation*}
		E_{k,W}:=\bigcup \Set{\pi \left(\supp(\omega)^{\perp_1} \oplus \textstyle\bigwedge^2\supp(\omega)^{\perp_1}\right) \, | \, \omega \in W^{\perp_2} \setminus \set{0}  , \, \rank(\omega)=k \, }
	\end{equation*}
is contained in an algebraic variety of codimension $2k+1$. 
\end{proposition}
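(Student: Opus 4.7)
The plan is to exhibit an algebraic variety containing $E_{k,W}$ via an incidence correspondence and to bound its codimension through a dimension count that crucially uses Lemma~\ref{para}.

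First, I translate the defining conditions algebraically: via the identification $\bigwedge^2 V\leftrightarrows\sk(r,\R)$, $\omega\leftrightarrows A_\omega$, one has $\supp(\omega)=\IM A_\omega$ and, by the adequate scalar product, $\supp(\omega)^{\perp_1}=\ker A_\omega$. Thus ``$v\in\supp(\omega)^{\perp_1}$'' and ``$\eta\in\bigwedge^2\supp(\omega)^{\perp_1}$'' become $A_\omega v=0$ and $A_\omega A_\eta=0$ respectively. Consequently, $E_{k,W}$ is contained in the Zariski closure of $\pi_\G(\mathcal{I})$, where
\[
\mathcal{I}:=\bigl\{([\omega],v,\eta)\in\mathbb{P}(W^{\perp_2}\cap\mathcal{A}_{\le k})\times V\times\textstyle\bigwedge^2 V:A_\omega v=0,\;A_\omega A_\eta=0\bigr\}
\]
is an algebraic variety projecting to $\G$ via $\pi_\G([\omega],v,\eta):=(v,\pi_2\eta)$.

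Next I fibre $\mathcal{I}$ over its first factor (of dimension $\dim\akw-1$, since $\mathcal{A}_{\le k}=\overline{\mathcal{A}_k}$); over a top-rank $[\omega]$ the fibre is the linear subspace $\ker A_\omega\oplus\bigwedge^2\ker A_\omega$, of dimension $(r-2k)+\binom{r-2k}{2}$. Let $\mathcal{G}_k\subset\gr(2k,V)$ be the image of $\akw\to\gr(2k,V)$, $\omega\mapsto\supp(\omega)$, and $d_Q:=\dim(\bigwedge^2 Q\cap W^{\perp_2})$, so that $\dim\akw=\dim\mathcal{G}_k+d_Q$. For a generic image point arising from $Q\in\mathcal{G}_k$, the fibre of $\pi_\G$ contains both $\mathbb{P}(\bigwedge^2 Q\cap W^{\perp_2})$ (whose elements share the support $Q$ and produce the same image, contributing dimension $d_Q-1$) and the lift freedom $(\eta+W)\cap\bigwedge^2 Q^{\perp_1}$ of dimension $\binom{r-2k}{2}-\dim\pi_2(\bigwedge^2 Q^{\perp_1})$. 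Combining yields
\[
\dim E_{k,W}\le\dim\mathcal{G}_k+(r-2k)+\dim\pi_2(\textstyle\bigwedge^2 Q^{\perp_1}).
\]

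The codimension claim thus reduces to proving $\dim\mathcal{G}_k+\dim\pi_2(\bigwedge^2 Q^{\perp_1})\le\dim W^{\perp_2}-1$. Fix a generic smooth $\omega\in\akw$ and an orthonormal basis adapted to it; by Lemma~\ref{para} the coordinate vector fields $\partial_{\lambda_t}\omega,\partial_{a_t^s}\omega,\partial_{b_t^s}\omega$ at $\omega$ all reduce to basis 2-vectors $e_i\wedge e_j$ with $\min(i,j)\le 2k$, giving the tangent-space identification
\[
T_{\omega}\mathcal{A}_k=\textstyle\bigwedge^2 Q\oplus(Q\wedge Q^{\perp_1}),\qquad Q:=\supp(\omega).
\]
Hence $\dim\akw=\dim T_\omega\akw\le\dim\bigl((\bigwedge^2 Q\oplus Q\wedge Q^{\perp_1})\cap W^{\perp_2}\bigr)$. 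On the other hand, the adequate orthogonal decomposition $\bigwedge^2 V=\bigwedge^2 Q\oplus(Q\wedge Q^{\perp_1})\oplus\bigwedge^2 Q^{\perp_1}$ together with the fact that $\pi_2$ is the orthogonal projection onto $W^{\perp_2}$ identifies the latter subspace with the orthogonal complement of $\pi_2(\bigwedge^2 Q^{\perp_1})$ inside $W^{\perp_2}$, of dimension $\dim W^{\perp_2}-\dim\pi_2(\bigwedge^2 Q^{\perp_1})$. Combining, $\dim\mathcal{G}_k+d_Q=\dim\akw\le\dim W^{\perp_2}-\dim\pi_2(\bigwedge^2 Q^{\perp_1})$; since $d_Q\ge 1$ this yields the required inequality. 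The main obstacle is the tangent-space identification above; everything else reduces to routine bookkeeping with the orthogonal decomposition.
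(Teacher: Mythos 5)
Your argument is correct and, once unwound, rests on the same two ingredients as the paper's proof: (a) the fibre $\supp(\omega)^{\perp_1}\oplus\bigwedge^2\supp(\omega)^{\perp_1}$ projects to a subspace of $\G$ of dimension $(r-2k)+\binom{r-2k}{2}-m$, where $m=\dim\bigl(W\cap\bigwedge^2\supp(\omega)^{\perp_1}\bigr)$, and (b) Lemma~\ref{para} forces $\dim\akw\le\dim\mathcal{A}_k-(\dim W-m)$. The packaging differs: the paper obtains (b) by writing the equations $\braket{\omega(\xi,a,b),\theta^h}=0$ in the coordinates of Lemma~\ref{para} and computing $\rank \,\de P(0,0,0)=n-m$ from a block-triangular matrix, whereas you obtain it from the dual, coordinate-free identification $T_\omega\mathcal{A}_k=\bigwedge^2 Q\oplus(Q\wedge Q^{\perp_1})$ intersected with $W^{\perp_2}$; these are literally the same computation (the rows of $\de P(0,0,0)$ are the components $\theta^h_{ij}$ with $i\le 2k$, i.e.\ the pairings of the $\theta^h$ against your tangent space), though your formulation is arguably cleaner. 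Likewise, your incidence correspondence together with the projective factor and the observation $d_Q\ge 1$ reproduces the paper's subtraction of $1$ coming from $\mathcal{B}(\lambda\omega)=\mathcal{B}(\omega)$. The one point you must tighten is the genericity bookkeeping: $d_Q$ and $m$ (hence the fibre dimensions of both the support map and of $\pi_\G$) are not constant on $\akw$, so the chain $\dim\mathcal{G}_k+d_Q=\dim\akw\le\dim W^{\perp_2}-\dim\pi_2(\bigwedge^2 Q^{\perp_1})$ mixes a global dimension with pointwise data attached to one particular $Q$, and is only valid stratum by stratum; since $E_{k,W}$ is the union over \emph{all} strata, the bound must be checked on each one, not only the top-dimensional one. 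The paper handles this by decomposing $E_{k,W}=\bigcup_m E_{k,W}^m$ and running the count on each $\akwm$ separately (the $m$'s cancel, which is why the final bound is uniform); importing that stratification into your argument closes this gap, and everything else goes through.
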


\begin{proof}
	We define
	\begin{align*}
		\akw&=\Set{\omega \in W^{\perp_2} \setminus \set{0} \, | \, \rank(\omega)=k} \, ,\\
		\akwm&=\Set{\omega \in W^{\perp_2} \setminus \set{0} \, | \, \rank(\omega)=k, \, \dim \left(W\cap\textstyle\bigwedge^2\supp(\omega)^{\perp_1} \right)=m} \, , \\
		\mathcal{B}(\omega)&=\pi \left(\supp(\omega)^{\perp_1} \oplus \textstyle\bigwedge^2\supp(\omega)^{\perp_1}\right) \, , \\
		E_{k,W}^m&= \bigcup \Set{\mathcal{B}(\omega) \, | \, \omega \in \akwm \,} \, .
	\end{align*}
We notice that $\akw$ and $\akwm$ are cones and semi-algebraic subvarieties of $\mathcal{A}_k$. Moreover $\mathcal{B}(\lambda\omega)=\mathcal{B}(\omega)$ for $\lambda \in \R \setminus \set{0}$ and
\begin{equation*}
	\dim \mathcal{B}(\omega)= \frac{(r-2k)(r-2k +1)}{2} - m \qquad \text{for $\omega \in \akwm$}
\end{equation*}
that is constant over $\omega \in \akwm$. Therefore $E_{k,W}^m$ is contained in an algebraic variety of dimension 
\begin{equation}\label{eq_ast}
	\frac{(r-2k)(r-2k +1)}{2} - m + R(m,k,W) - 1 \, 
\end{equation}
where $R(m,k,W)$ is the smallest dimension of a smooth semi-algebraic variety containing $\akwm$. Indeed, let $\omega_0 \in \akwm$; $\akwm$ can locally be parametrized by $R(m,k,W) \le \dim \akwm$ parameters around $\omega_0$. Let us consider $\set{ \mathcal{B}(\omega) \, | \, \omega \in \akwm \, }$, that is a semi-algebraic variety (since it is image of a semi-algebraic variety through an algebraic map) whose dimension is not greater than $\dim \akwm - 1$ (since each non-empty fibre of $\set{ \mathcal{B}(\omega) \, | \, \omega \in \akwm \, }$ contains a one-dimensional set as $\mathcal{B}(\lambda\omega)=\mathcal{B}(\omega)$). Finally we recall that the described local parametrization of $\akwm$ also provides an algebraic map $\akwm \to V^{2k}$ that maps $\omega$ to a basis of $\supp(\omega)$, thus we can locally find a basis to $\mathcal{B}(\omega)$ that depends algebraically on $\omega$, using this basis we can parametrize $\mathcal{B}(\omega)$ with $\dim \mathcal{B}(\omega)$ parameters. As a result, for every $x \in E_{k,W}^m$ there is a local algebraic submersion from a neighbourhood of $\R^{
R(m,k,W)}$ to (possibly a superset of) a neighbourhood of $x$.

The final step of the proof is to estimate $R(m,k,W)$: we will prove that 
\[
R(m,k,W) \le \dim \mathcal{A}_k - (n-m) = k(2r-2k-1) - n +m \,,
\]
where we used Lemma~\ref{para}. 
Let $\omega_0 \in \akwm$ be fixed, then $\omega_0 = e_1 \wedge e_2 + \cdots + e_{2k-1} \wedge e_{2k}$ for linearly independent vectors $e_1,\dots,e_{2k} \in V$, non necessarily orthogonal. 
Let us consider  an orthonormal basis $\set{e_{2k+1},\dots,e_{r}}$ of $\supp(\omega_0)^{\perp_1}$ so that $\set{e_1,\dots,e_{2k},e_{2k+1},\dots,e_r}$ is a basis for $V$. Considering this basis, Lemma~\ref{para} provides the local (around $\omega_0$) parametrization of $\mathcal{A}_k\equiv(\bigwedge^2V)_k$ (which is a superset of $\akwm$)  given by
\begin{equation*}
	\omega(\xi,a,b)= \omega_0+\sum_{t=1}^k \bigg[\xi_t(e_{2t-1} \wedge e_{2t}
	 +  \sum_{s=2t+1}^r \Big( a^s_t(e_{2t} \wedge e_s) +  b^s_t (e_{2t-1} \wedge e_s)\Big)\bigg] + \sum_{t < s} Q_{ts}(\xi,a,b)(e_t \wedge e_s)
\end{equation*}
where $Q_{ts}(\xi,a,b)$ are homogeneous polynomial of degree $2$ in $(\xi,a,b)$. 
In the previous parametrization we used Lemma~\ref{para} making the useful substitution $\lambda_i = 1 + \xi_i$, so that $\omega_0=\omega(0,0,0)$. Let us now consider  the basis $v_1,\dots,v_r$ of $V$ defined by
\begin{equation*}
	v_i \in \left(\spn \set{ e_j \, | \, j \neq i \, }\right)^\perp \qquad \braket{e_i,v_i}=1  \quad \forall \, i=1,\dots,r \, ,
\end{equation*} 
so that
\[
\begin{array}{l}
\braket{v_i,e_j}=\delta_{ij}\qquad \text{for every }i,j=1,\dots,r\\
v_j=e_j\qquad  \text{for every }j=2k+1,\dots,r\\
\spn\set{v_1,\dots,v_{2k}}=\spn \set{ e_1,\dots,e_{2k}}
\end{array}
\]
Setting $n:=\dim W$, let $\theta^1,\dots,\theta^n$ be a set of generators for $W$  such that
\begin{equation}\label{eq_bullet}
\spn\set{\theta^{n-m+1},\dots,\theta^n}=W\cap \textstyle\bigwedge^2\supp(\omega_0)^{\perp_1}
=W\cap \textstyle\bigwedge^2\spn\set{v_{2k+1},\dots,v_r}.
\end{equation}
Let $\theta^h_{ij}$ be the coordinates of $\theta^h$ with respect to the basis $\set{ v_i \wedge v_j \, | \, i < j}$, i.e.,
\begin{equation*}
	\theta^h = \sum_{i<j} \theta^h_{ij}(v_i \wedge v_j), \qquad h=1,\dots,n \, .
\end{equation*}
By~\eqref{eq_bullet} 
\begin{equation}\label{eq_star}
\theta^h_{ij}=0\qquad\text{whenever }n-m+1\le h\le n,\ i\le 2k\text{ and }i<j.
\end{equation}
The orthogonality between $\omega(\xi,a,b)$ and $W$ can be expressed by the  system
\begin{equation*}
\braket{\omega(\xi,a,b), \theta^1}=\dots=\braket{\omega(\xi,a,b), \theta^n}=0
\end{equation*}
which, due to 
\begin{align*}
\braket{v_i \wedge v_j, e_h \wedge e_\ell}
&=\braket{v_i,e_h}\braket{v_j,e_\ell} - \braket{v_i,e_\ell}\braket{v_j,e_h} \\
&=\delta_{ih}\delta_{j\ell}-\delta_{i\ell}\delta_{jh}\\
&=\delta_{ih}\delta_{j\ell}\qquad\text{whenever $i<j$ and $h<\ell$,}
\end{align*}
%
in local coordinates becomes
\begin{equation*}
	\begin{cases}
P_1(\xi,a,b):=
\sum_{i=1}^k\Big[ \xi_i\theta^1_{2i-1,2i} +  \sum_{j=2i+1}^r \big(a^j_i\theta^1_{2i,j} + b^j_i \theta^1_{2i-1,j}\big) \Big]
+ \sum_{i < j} Q_{ij}(\xi,a,b)\theta^1_{i,j}= 0 \\
		\qquad  \vdots \\
P_n(\xi,a,b):=
\sum_{i=1}^k\Big[ \xi_i\theta^n_{2i-1,2i} +  \sum_{j=2i+1}^r \big(a^j_i\theta^n_{2i,j} + b^j_i \theta^n_{2i-1,j}\big) \Big]
+ \sum_{i < j} Q_{ij}(\xi,a,b)\theta^n_{i,j}= 0 \, .
	\end{cases} 
\end{equation*}
The latter system defines the algebraic variety $\akw$ that contains $\akwm$. We can estimate the dimension of $\akw$ (and therefore the dimension of $\akwm$) computing the rank of $\de P(0,0,0)$ were $P$ is the polynomial function
\begin{equation*}
	P(\xi,a,b)=\begin{pmatrix}
		P_1(\xi,a,b) \\ \vdots \\ P_n(\xi,a,b)
	\end{pmatrix} \, .
\end{equation*}
We observe that $\de \left( \sum_{i < j} Q_{ij}(\xi,a,b)\theta^h_{i,j}\right)(0,0,0)=0$ since $Q_{ij}$ are homogeneous polynomial of degree $2$ in $\xi,a,b$. Now we finally compute
\begin{equation*}
\begin{aligned}
 \de P(0,0,0)&=
 \begin{pmatrix}
 	\theta^1_{1,2} & \cdots & \theta^1_{2k-1,2k} & \left( \theta^1_{2i,j} \right)^{j=2i+1,\dots,r}_{i=1,\dots,k} & \left( \theta^1_{2i-1,j} \right)^{j=2i+1,\dots,r}_{i=1,\dots,k} \\
 	\vdots & \ddots & \vdots & \vdots & \vdots \\
	\theta^n_{1,2} & \cdots & \theta^n_{2k-1,2k} & \left( \theta^n_{2i,j} \right)^{j=2i+1,\dots,r}_{i=1,\dots,k} & \left( \theta^n_{2i-1,j} \right)^{j=2i+1,\dots,r}_{i=1,\dots,k} \\
 \end{pmatrix}
\\
 &= 	\begin{pmatrix} \left( \theta^1_{ij}\right)_{i \le 2k, i<j} \\ \vdots \\ \left( \theta^n_{ij}\right)_{i \le 2k, i<j}
	\end{pmatrix}
 \end{aligned}
\end{equation*}  
so that by~\eqref{eq_star}
\begin{equation*}
	\rank (\de P(0,0,0))= \rank \begin{pmatrix} \left( \theta^1_{ij}\right)_{i \le 2k, i<j} \\ \vdots \\ \left( \theta^n_{ij}\right)_{i \le 2k, i<j}
	\end{pmatrix} = \rank \begin{pmatrix} \left( \theta^1_{ij}\right)_{i \le 2k, i<j} \\ \vdots \\ \left( \theta^{n-m}_{ij}\right)_{i \le 2k, i<j}
\end{pmatrix} \, .
\end{equation*}
Moreover we know that
\begin{equation*}
	\rank \begin{pmatrix} \left( \theta^1_{ij}\right)_{i<j} \\ \vdots \\ \left( \theta^n_{ij}\right)_{i<j}
	\end{pmatrix} =n \qquad \text{and} \qquad \rank \begin{pmatrix} \left( \theta^{n-m+1}_{ij}\right)_{2k+1 \le i<j} \\ \vdots \\ \left( \theta^n_{ij}\right)_{2k+1 \le i<j}
\end{pmatrix} =m,
\end{equation*}
where we used the fact that $\set{\theta^1,\dots,\theta^n}$ is a basis for $W$ and~\eqref{eq_bullet}.
Then
\begin{equation*}
	n= \rank 
	\left(\begin{array}{@{}c|c@{}}
		\begin{matrix}
			\left( \theta^1_{ij}\right)_{i \le 2k, i<j} \\ \vdots \\ \left( \theta^{n-m}_{ij}\right)_{i \le 2k, i<j}			
		\end{matrix}
		& \Asterisk \\
		\hline
		\bigzero &
		\begin{matrix}
			\left( \theta^{n-m+1}_{ij}\right)_{2k+1 \le i<j} \\ \vdots \\ \left( \theta^n_{ij}\right)_{2k+1 \le i<j}
		\end{matrix}
	\end{array}\right) ,
\end{equation*}
hence
\begin{equation*}
n	= \rank \begin{pmatrix} \left( \theta^1_{ij}\right)_{i\le 2k, i<j} \\ \vdots \\ \left( \theta^{n-m}_{ij}\right)_{i\le 2k, i<j}
\end{pmatrix} 
+ \rank \begin{pmatrix} \left( \theta^{n-m+1}_{ij}\right)_{2k+1 \le i<j} \\ \vdots \\ \left( \theta^n_{ij}\right)_{2k+1 \le i<j}
\end{pmatrix} 
\end{equation*}
and
\begin{equation*}
	\rank \left(\de P(0,0,0)\right)= \rank \begin{pmatrix} \left( \theta^1_{ij}\right)_{i \le 2k, i<j} \\ \vdots \\ \left( \theta^{n-m}_{ij}\right)_{i \le 2k, i<j}
	\end{pmatrix} = n - \rank \begin{pmatrix} \left( \theta^{n-m+1}_{ij}\right)_{2k+1 \le i<j} \\ \vdots \\ \left( \theta^n_{ij}\right)_{2k+1 \le i<j}  
\end{pmatrix} = n- m \, .
\end{equation*}
Thus we proved that at any point $\omega_0 \in \akwm$, the algebraic variety $\akwm$ is locally contained in a smooth manifold of dimension
\begin{equation*}
	\dim \mathcal{A}_k - (n-m) = k(2r-2k-1) - n +m \, .
\end{equation*}
Recalling~\eqref{eq_ast}, we can now estimate
\begin{align*}
	\dim E_{k,W}^m&=\frac{(r-2k)(r-2k +1)}{2} - m + R(m,k,W) - 1 \\ &\le \frac{(r-2k)(r-2k +1)}{2} - m + k(2r-2k-1) - n +m - 1 \\
	&= \frac{r(r+1)}{2} -k(2r+1) + 2k^2 + 2kr - 2k^2 - k -1 - n \\
	&= {\frac{r(r+1)}{2} - n} - (2k+1)\\
	&=\dim\G - (2k+1)\, .
\end{align*}
Since $E_{k,W}$ is a finite union of sets $E_{k,W}^m$, we can conclude that also $E_{k,W}$ is contained in an algebraic variety of codimension $2k+1$. 
\end{proof}

We can now prove one of our main results.

\begin{proof}[Proof of Theorem~\ref{thm_step2}]
Recalling~\eqref{eq_abnG_EkW}, we have $\abn_\G = \bigcup_{k} E_{k,W}$
and, by Proposition~\ref{prop_EkW}, each  $E_{k,W}$ has codimension at least $2k+1$ in $\G$. The statement follows by noticing that $\tilde{k} = \min \set{ \rank(\omega) \, | \, \omega \in W^{\perp_2} \setminus \set{0}}$ is the smallest $k$ such that $E_{k,W}$ is non-empty. 
\end{proof}

One may ask whether the estimate on the dimension of the abnormal set provided by Theorem~\ref{thm_step2} is optimal. Heisenberg groups provide a family of Carnot groups where this estimate is optimal. 

\begin{example}\label{exam_Heis}
The \emph{$k$-th Heisenberg group} is the stratified group $\mathbb{H}^k$ whose Lie algebra stratification $\g = \galg_1 \oplus \g_2$ is given by
	\begin{equation*}
		\g_1 = \spn \Set{X_1,\dots,X_k,Y_1,\dots,Y_k} \, , \qquad \g_2 = \spn \Set{T} \, ,
	\end{equation*}
	and the only non-zero Lie brackets between generators are 
	\begin{equation*}
		[X_i,Y_i]= T \qquad \text{for $i=1,\dots,k$} \, .
	\end{equation*}
It can be easily checked that $\g_2=\bigwedge^2 V/W$ for $W:=\spn(X_1\wedge Y_1+\dots+X_k\wedge Y_k)^{\perp_2}$, hence $W^\perp$ is a one dimensional subspace spanned by a $2$-vector of rank $k$. This implies that $\tilde k=k$, and it is well known (see e.g.~\cite[Example 2.4]{OV}) that $\abn_{\H^k}=\{0\}$ has codimension $2k+1$.
\end{example}

On the other hand, this is not always the case. The estimate on $\dim E_{k,W}$ we found in Proposition~\ref{prop_EkW} may be loose due to two possible issues:
\begin{itemize}
\item
{\em First issue:} we may have $\supp(\omega_1)=\supp(\omega_2)$ for two linearly independent $2$-vectors $\omega_1,\omega_2 \in W^{\perp_2} \setminus \set{0}$, as in Example~\ref{exam_1} below. 

\item 

{\em Second issue:} we  computed the dimension of the algebraic variety defined by
\begin{equation*}
P_1(\xi,a,b)=\dots=P_{n-m}(\xi,a,b)=0,
\end{equation*}
while the variety defined by
\begin{equation*}
P_1(\xi,a,b)=\dots=P_{n}(\xi,a,b)=0
\end{equation*}
may have a lower dimension, despite not being a smooth manifold around $\omega_0$. Observe that this does not constitute a problem when $\mathcal{A}_{k,W}^0$ is non-empty, while it is the reason behind the following Example~\ref{exam_2}.
\end{itemize}

We now provide two examples of  Carnot groups whose abnormal sets has even codimension: therefore, in both cases the estimate provided by Theorem~\ref{thm_step2} is not optimal.

\begin{example}\label{exam_1}
Let $\G$ be the 6-dimensional  Carnot group of step 2 whose stratified algebra $\g=\g_1\oplus\g_2$ is such that
\begin{align*}
& \g_1=\spn\set{X_1,X_2,X_3,X_4},\qquad \g_2=\spn\set{T_1,T_2}
\end{align*}
and the only non-vanishing commutation relations between the generators are given by
\[
[X_1,X_2]=[X_3,X_4]=T_1,\qquad [X_1,X_4]=[X_2,X_3]=T_2.
\]
We can see $\g$ as the quotient $\mfk f_4/W$ of the free algebra $\mfk f_4=V\oplus\bigwedge^2 V$ (where $V:=\g_1$ ) by the subspace $W\le \bigwedge^2 V$ defined by
\begin{align*}
& W^{\perp_2} := \spn \set{\omega_1,\omega_2},\qquad\text{where}\\
& \omega_1:=X_1 \wedge X_2 + X_3 \wedge X_4\qquad\text{and}\qquad\omega_2:=X_1 \wedge X_4 + X_2 \wedge X_3.
\end{align*}
The adequate scalar product on $\f_4$ we consider is the one such that $X_1,X_2,X_3,X_4$ is orthonormal. We claim that $W^{\perp_2}$ does not contain simple $2$-vectors. Indeed,  the determinant of the skew-symmetric matrix $A_{t\omega_1+s\omega_2}$ associated with the linear combination $t\omega_1+s\omega_2$ is
	\begin{equation*}
		\det(A_{t\omega_1+s\omega_2})= \begin{vmatrix}
			0 & t & 0 & s \\ -t & 0 & s & 0 \\ 0 & -s & 0 & t \\ -s & 0 & -t & 0 
		\end{vmatrix} = (t^2 + s^2)^2,
		\end{equation*}
which is not null as long as $t$ and $s$ are not both zero. It follows that every non trivial linear combination $\omega$ of $\omega_1$ and $\omega_2$ has rank $2$ and, in particular, $\supp\;\omega=V$.	
	Recalling~\eqref{eq_abnG_EkW}
	\begin{align*}
		\abn_\G &= \bigcup \Set{\pi \left(\supp(\omega)^{\perp_1} \oplus \textstyle\bigwedge^2\supp(\omega)^{\perp_1}\right) \, | \, \omega \in W^{\perp_2} \setminus \set{0}, \, \rank(\omega)=2 \,} 
		=\set{0},
	\end{align*} 
	hence $\abn_\G=\{0\}$ has codimension $6$.
\end{example}

In the previous example, the codimension is less than $5$ (as estimated by Theorem~\ref{thm_step2}) because of the first issue, as $\omega_1$ and $\omega_2$ are two linearly independent $2$-vectors in $W^{\perp_2}$ with the same support. 
In order to construct an example where the codimension is $4$, we will instead build on the second issue. Namely, we will provide an example where $\mathcal{A}_{1,W}\ne\emptyset$, but $\mathcal{A}_{1,W}^0=\emptyset$, taking advantage of the geometry of simple 2-vectors.

\begin{example}\label{exam_2}
Let $\G$ be the 6-dimensional  Carnot group of step 2 whose stratified algebra $\g=\g_1\oplus\g_2$ is such that
\begin{align*}
& \g_1=\spn\set{X_1,X_2,X_3,X_4},\qquad \g_2=\spn\set{T_1,T_2}
\end{align*}
and the only non-vanishing commutation relations between the generators are given by
\[
[X_1,X_2]=T_1,\qquad [X_1,X_4]=[X_2,X_3]=T_2.
\]
We can see $\g$ as the quotient $\mfk f_4/W$ of the free algebra $\mfk f_4=V\oplus\bigwedge^2 V$ (where $V:=\g_1$ ) by the subspace $W\le \bigwedge^2 V$ defined by
\begin{align*}
& W^{\perp_2}:= \spn \set{\omega_1,\omega_2},\qquad\text{where}\\
& \omega_1:=X_1 \wedge X_2 \qquad\text{and}\qquad\omega_2:=X_1 \wedge X_4 + X_2 \wedge X_3.
\end{align*}
The adequate scalar product on $\f_4$ we consider is the one such that $X_1,X_2,X_3,X_4$ is orthonormal. The determinant of the skew-symmetric matrix $A_{t\omega_1+s\omega_2}$ associated with the linear combination $t\omega_1+s\omega_2$ is
	\begin{equation*}
		\det(A_{t\omega_1+s\omega_2})= \begin{vmatrix}
			0 & t & 0 & s \\ -t & 0 & s & 0 \\ 0 & -s & 0 & 0 \\ -s & 0 & 0 & 0 
		\end{vmatrix} = s^4,
		\end{equation*}
which is zero if and only if $s=0$; in particular, a linear combination $\omega=t\omega_1 + s\omega_2=t\omega_1$ ha rank 1 if and only if it is a (non-zero) multiple of $X_1\wedge X_2$. 
Recalling~\eqref{eq_abnG_EkW}
	\begin{align*}
		\abn_\G &= \pi \left(\supp(X_1 \wedge X_2)^{\perp_1} \oplus \textstyle\bigwedge^2\supp(X_1 \wedge X_2)^{\perp_1}\right) \\ 
		& \quad \cup \bigcup \Set{\pi \left(\supp(\omega)^{\perp_1} \oplus \textstyle\bigwedge^2\supp(\omega)^{\perp_1}\right) \, | \, \omega \in W^{\perp_2} \setminus \set{0}, \, \rank(\omega)=2 \,} \\
		&= \spn \set{X_3,X_4} \oplus \set{0}\   \cup\ \set{0} \,, 
	\end{align*} 
where we used the fact that $\bigwedge^2 \spn \set{X_3,X_4} \subseteq W$. Eventually, $\abn_\G=\spn \set{X_3,X_4}$ has codimension $4$.
\end{example}

\section{The Sard problem in filiform groups}
	\subsection{Filiform groups}\label{subsec_filiformi}
A {\em filiform group} is a Carnot group 	associated with a stratified Lie algebra $\galg=\galg_1\oplus\dots\oplus\galg_s$ such that
\[
\dim\galg_1=2\qquad\text{and}\qquad\dim\galg_2=\dots=\dim\galg_s=1.
\]
We fix a basis $X_1,\dots,X_{s+1}$ of $\galg$ such that
\[
\galg_1=\Span\{X_1,X_2\}\qquad\text{and}\qquad\galg_j=\Span\{X_{j+1}\}\ \forall\:j=2,\dots,s.
\]
There are  two non-isomorphic classes of filiform groups only (see~\cite{Vergne}), which we list according to their non-trivial bracket relations:
	\begin{itemize}
		\item Type I filiform groups, where  the only non-trivial relations are given by
		\[
			\begin{aligned}
				& X_3 = [X_1,X_2],\\
				& X_4 = [X_1,X_3]=[X_1,[X_1,X_2]],\\
				& \vdots \\
				& X_{s+1} = [X_1,X_{s}]=\underbrace{[X_1,[\dots,[X_1}_{\text{$(s-1)$-times}},X_2]\dots]].
			\end{aligned}
		\]
		\item Type II filiform groups, where $s$ is odd and the only non-trivial relations are given by
		\[
			\begin{aligned}
				& X_3 = [X_1,X_2],\\
				& X_4 = [X_1,X_3]=[X_1,[X_1,X_2]],\\
				& \vdots \\
				& X_{s} = [X_1,X_{s-1}]=\underbrace{[X_1,[\dots,[X_1}_{\text{$(s-2)$-times}},X_2]\dots]],\\
				& X_{s+1} = (-1)^i[X_i, X_{s+2-i}], \ \ \ \ i=2,\dots,s.
			\end{aligned}
		\]
	\end{itemize}
	
\begin{remark}\label{rem_I=II_in_step_3}
We observe that the filiform groups of type I and II with nilpotency step $s=3$ are isomorphic (Engel group); we can therefore adopt the convention that the step of a type-II filiform is an odd integer $s\geq 5$.
\end{remark}
	
We will denote by $X_1^*,\dots, X_{s+1}^*$ the basis of $\galg^*$ dual to $X_1,\dots,X_r$; accordingly,  each $\lambda\in\galg^*$ is written in these coordinates as $\lambda=\lambda_1 X_1^*+\dots+\lambda_{s+1}X_{s+1}^*$ for suitable $\lambda_1,\dots,\lambda_{s+1}$.

\subsection{Type I filiform groups} 
Let us characterize singular curves in type I filiform groups of step $s\geq 3$. For the well-known case $s=2$ (i.e., the first Heisenberg group) see Remark~\ref{rem_Heis1}.

\begin{proposition}\label{prop_filiformiI}
Let $\G$ be a type I filiform group of step $s\geq 3$ with generators $X_1,\dots,X_{s+1}$, as in \S~\ref{subsec_filiformi}. Then singular controls $u\in L^1([0,1],\galg_1)$ are exactly those for which  $u_1=0$ a.e. on $[0,1]$. In particular,  abnormal curves  are the absolutely continuous curves contained in the line $t\mapsto \exp(tX_2)$ and $\abn_\G=\{\exp(t X_2)\,|\,t\in\R\}$ has codimension $s$.
\end{proposition}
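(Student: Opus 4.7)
The plan is to apply Proposition~\ref{prop_prop11BV} and exploit the very rigid bracket structure of a type~I filiform algebra. The key observation is that the only non-trivial bracket relations are $[X_1,X_j]=X_{j+1}$ for $2\le j\le s$, so $\mathrm{ad}_{X_2}\equiv 0$ on $\galg_2\oplus\dots\oplus\galg_s$. Consequently, for a control $u(t)=(u_1(t),u_2(t))\in\galg_1$, writing $\mathrm{ad}_{X_{u(\tau)}}=u_1(\tau)\mathrm{ad}_{X_1}+u_2(\tau)\mathrm{ad}_{X_2}$, the only iterated brackets in~\eqref{eq_alt_char_im} that survive are those where $\mathrm{ad}_{X_2}$ appears at most once, and if it appears it must do so at the very first step (otherwise the chain is already in $\galg_{\ge 2}$, on which $\mathrm{ad}_{X_2}$ vanishes). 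A direct computation then gives, for every $j\ge 1$,
\begin{align*}
(\mathrm{ad}_{X_{u(\tau_j)}}\circ\dots\circ\mathrm{ad}_{X_{u(\tau_1)}})X_2 &= u_1(\tau_1)\cdots u_1(\tau_j)\,X_{j+2},\\
(\mathrm{ad}_{X_{u(\tau_j)}}\circ\dots\circ\mathrm{ad}_{X_{u(\tau_1)}})X_1 &= -u_2(\tau_1)u_1(\tau_2)\cdots u_1(\tau_j)\,X_{j+2},
\end{align*}
where the indices $j+2$ are read as $\le s+1$ and the terms vanish for $j+2>s+1$.

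Using the standard identity $\int_{\Sigma_j(t)}u_1(\tau_1)\cdots u_1(\tau_j)\,d\tau_j\cdots d\tau_1=\frac{1}{j!}U_1(t)^j$ where $U_1(t):=\int_0^t u_1(\tau)\,d\tau$, Proposition~\ref{prop_prop11BV} then implies that $\IM(d_uF)=d_eR_{\gamma_u(1)}(S_u)$, where $S_u\subset\galg$ is spanned by the two families
\begin{equation*}
X_1-\Big(\int_0^t u_2(\tau)\,d\tau\Big)X_3+\sum_{j=2}^{s-1}\big(\dots\big)X_{j+2},\qquad X_2+\sum_{j=1}^{s-1}\frac{U_1(t)^j}{j!}\,X_{j+2},\qquad t\in[0,1].
\end{equation*}

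First I would treat the case $u_1=0$ a.e.: then $U_1\equiv 0$ kills every term with $Y=X_2$ (leaving only $X_2$), and the $Y=X_1$ family reduces to $X_1+cX_3$, so $S_u\subseteq\Span\{X_1,X_2,X_3\}$ which has dimension $3<s+1$; hence $u$ is singular. Conversely, if $u_1\not\equiv 0$, then $U_1$ is continuous with $U_1(0)=0$ and takes some value $\alpha\ne 0$, so $U_1([0,1])$ contains an interval $I\ni 0$. Choosing $s-1$ distinct non-zero values $\alpha_1,\dots,\alpha_{s-1}\in I$, the vectors $\sum_{j=1}^{s-1}\frac{\alpha_i^j}{j!}X_{j+2}$ form a Vandermonde-type system and hence span $\Span\{X_3,\dots,X_{s+1}\}$; combined with $X_1,X_2$ (corresponding to $t=0$), we get $S_u=\galg$, so $d_uF$ is surjective. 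The Vandermonde argument is the only mildly delicate point.

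Finally, I would translate this into a statement about curves and compute the codimension. A singular control $u$ has $u_1=0$ a.e., so $\dot\gamma_u(t)=u_2(t)(dL_{\gamma_u(t)})_eX_2$; since $X_2$ generates the abelian one-parameter subgroup $\{\exp(rX_2):r\in\R\}$ and $(dL_{\exp(rX_2)})_eX_2=X_2$ there, the curve reads $\gamma_u(t)=\exp\!\big(\int_0^t u_2(\tau)\,d\tau\cdot X_2\big)$ and is entirely contained in the horizontal line $\ell:=\{\exp(tX_2):t\in\R\}$. Reparametrisations of arcs of $\ell$ are all absolutely continuous horizontal singular curves, so $\abn_\G=\ell$, which has dimension $1$ and hence codimension $s$ in the $(s+1)$-dimensional group $\G$.
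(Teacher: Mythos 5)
Your proof is correct, but it reaches the conclusion by a genuinely different route than the paper. Both arguments start from Proposition~\ref{prop_prop11BV}, but where you diverge is in how you decide surjectivity of $d_uF$. The paper works dually: it assumes $u$ is singular, takes a nonzero annihilating covector $\lambda$ (with $\lambda_1=\lambda_2=0$ by~\eqref{eq_inv}), introduces the hierarchy of functions $A_i^\lambda$ satisfying $\tfrac{d}{dt}A_i^\lambda=u_1A_{i+1}^\lambda$, and runs a measure-theoretic induction on the set of Lebesgue points of $u_1$ (the ``no isolated points'' argument) to force $\lambda_3=\dots=\lambda_{s+1}=0$, a contradiction. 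You instead work primally: you exploit the fact that for type~I filiform brackets the iterated integrals close up into the elementary identity $\int_{\Sigma_j(t)}u_1(\tau_1)\cdots u_1(\tau_j)\,d\tau=\tfrac{1}{j!}U_1(t)^j$, so that the $Y=X_2$ family becomes the explicit curve $t\mapsto X_2+\sum_j\tfrac{U_1(t)^j}{j!}X_{j+2}$, and then a Vandermonde determinant at $s-1$ distinct nonzero values of the continuous function $U_1$ yields surjectivity whenever $u_1\not\equiv0$. Your computation of the brackets, the spanning argument, the singularity when $u_1=0$ a.e.\ (image inside $\Span\{X_1,X_2,X_3\}$, proper since $s\ge3$), and the final identification of $\abn_\G$ with the one-dimensional line $\{\exp(tX_2)\}$ of codimension $s$ are all sound. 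Your approach is arguably more elementary and self-contained for type~I groups, since it replaces the Lebesgue-point induction with closed-form integration and linear algebra; the trade-off is that the paper's covector technique is the one that carries over to the type~II case (Proposition~\ref{prop_filiformiII}), where the last iterated integral involves $u_2$ and no longer collapses to a power of a single primitive, so the Vandermonde trick would not apply directly there.
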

\begin{proof}
Let $u\in L^1([0,1],\galg_1)$ be a singular control. Using~\eqref{eq_alt_char_im} with $Y=X_2$ and taking the bracket relations into account, Proposition~\ref{prop_prop11BV} implies that the subspace
\begin{equation}\label{eq_imfilI}
		\begin{aligned}
		\Span_{t\in [0,1]} & \left\{X_2+\int_0^t u_1(\tau_1)d\tau_1\:X_3+\iint\limits_{0\le\tau_2\le \tau_1\le t} u_1(\tau_2)u_1(\tau_1) d\tau_2 d\tau_1\:X_4\right. \\
		& \left.\quad +\dots+\idotsint\limits_{0\le\tau_{s-1}\le\dots\le \tau_1\le t}u_1(\tau_{s-1})\dots u_1(\tau_1)d\tau_{s-1}\dots d\tau_1\:X_{s+1}\right\}
		\end{aligned}
\end{equation}
is contained in $(d_eR_{\gamma_u(1)})^{-1}(\IM d_u F)$. Since $u$ is singular, there exists $\lambda\in \mfk{g}^*$ such that $\lambda\neq 0$ and $\lambda\perp(d_eR_{\gamma_u(1)})^{-1}(\IM d_u F)$. By~\eqref{eq_inv} we have
\begin{equation}\label{eq_lambda120}
\lambda_1=\lambda_2=0.
\end{equation}
Since $\lambda$ is orthogonal to all the elements in~\eqref{eq_imfilI} we deduce that for every $t\in [0,1]$
\begin{equation}\label{eq_covectorI}
\begin{aligned}
A_1^\lambda(t):=
&\lambda_{3}\int_0^t u_1(\tau_1)d\tau_1
+\lambda_4\iint\limits_{0\le\tau_2\le \tau_1\le t} u_1(\tau_2)u_1(\tau_1)d\tau_2 d\tau_1\\
&+\dots+\lambda_{s+1}\idotsint\limits_{0\le\tau_{s-1}\le\dots\le \tau_1\le t}u_1(\tau_{s-1})\dots u_1(\tau_1)d\tau_{s-1}\dots d\tau_1\ =\ 0.
\end{aligned}
\end{equation}
For $i=2,\dots, s-1$ we introduce $A_i:[0,1]\to\R$ by
\[
	\begin{aligned}
			A_i^\lambda(t)&:=\lambda_{i+1}+\lambda_{i+2}\int_0^t u_1(\tau_i)d\tau_i+\dots+\lambda_{s+1}\idotsint\limits_{0\le\tau_{s-1}\le\dots\le \tau_i\le t}u_1(\tau_{s-1})\dots u_1(\tau_i)d\tau_{s-1}\dots d\tau_i,\\
			A_s^\lambda(t)&:=\lambda_{s+1}
	\end{aligned}
\]
so that for each $i=1,\dots, s-1$
\begin{equation}\label{eq_derivateAit}
\frac{d}{dt}A_i^\lambda(t)=u_1(t)A_{i+1}^\lambda(t)\qquad\text{for a.e. }t\in[0,1].
\end{equation}

Assume that $C_0:= \left\{t\in [0,1]\mid u_1(t)\ne 0\right\}$ is such that $\mathscr{L}^1(C_0)>0$; then  
\[
	C_1:=\left\{ t\in C_0\mid t \text{ is a Lebesgue point of } u_1\right\}
\]
satisfies $\mathscr{L}^1(C_1)>0$. Observe that $C_1$ does not contain any isolated point.
Differentiating \eqref{eq_covectorI} and using~\eqref{eq_derivateAit} we find $u_1(t)A_2^\lambda(t)=0$ for a.e. $t\in[0,1]$, therefore
\[
A_2^\lambda(t)=0\qquad \text{for a.e. } t\in C_1.
\]
We claim that also
\[
A_3^\lambda(t)=0\qquad \text{for a.e. } t\in C_1.
\]
In fact, by~\eqref{eq_derivateAit} one has that at every differentiability point $t\in C_1$ of $A_2^\lambda$ 
\[
	A_2^\lambda(t+h)=h \cdot u_1(t)  A_3^\lambda(t)+o(h)\ne 0
\]
for all $h$ sufficiently small, implying that $t+h\not\in C_1$ for all sufficiently small values of $h$, i.e. that $t$ is isolated in $C_1$. This would be a contradiction.

Inductively, suppose that we have proved that $A_i^\lambda(t)$ is zero for a.e. $t\in C_1$ and let us then show that $A_{i+1}^\lambda(t)=0$ for a.e. $t\in C_1$. If, by contradiction, this were not true, then at differentiability points of $A_i^\lambda$ we would have
\[
	A_i^\lambda(t+h)=h \cdot u_1(t)  A_{i+1}^\lambda(t)+o(h)\ne 0
\]  
for all $h$ sufficiently small, implying that $t+h\not\in C_1$ for all such values of $h$, contradicting the fact that $C_1$ does not contain any isolated point.

We have thus proved the following: almost every $t\in C_1$ is a common zero for the functions $A_1,\dots, A_s$. But this readily implies that $\lambda_{s+1}=\dots=\lambda_{3}=0$, which is impossible since the covector $\lambda$ has to be nonzero. 

We conclude that a singular control $u$ satisfies $u_1(t)=0$ a.e. on $[0,1]$. Conversely, every control $u$ such that $u_1\equiv 0$ is indeed singular: in fact, in this case Proposition~\ref{prop_prop11BV} gives
\[
\IM(d_u F)= d_eR_{\gamma_u(1)}(\galg_1\oplus\galg_2)
\]
unless  $u_2=0$ on $[0,1]$ as well, in which case $ \IM(d_u F)= d_eR_{\gamma_u(1)}(\galg_1) $. In both cases, we deduce that $u$ is singular because $s\geq 3$, and this concludes the proof. 
%
%
\end{proof}

\begin{remark}\label{rem_Heis1}
When $s=2$ the filiform group $\G$ is the 3-dimensional Heisenberg group $\H^1$. In this setting one can follow the previous proof to find that the only singular control is the null one. In particular, $\abn_{\H^1}=\{e\}$.
\end{remark}

\subsection{Type II filiform groups} 
We now study singular curve in type II filiform groups; recall that the nilpotency step of such groups is an odd integer not smaller than 5, see Remark~\ref{rem_I=II_in_step_3}.

\begin{proposition}\label{prop_filiformiII}
If $\G$ is a type II filiform group of step $s\geq 5$,
then $u\in L^1([0,1],\galg_1)$ is a singular control if and only if there exists $a\in\R$ such that both the following statements
\begin{itemize}
\item[(i)] either $u_1(t)=0$ or $u_2(t)=0$
\item[(ii)] if $u_1(t)\neq 0$, then $\int_0^t u_2(\tau)\:d\tau=a$
\end{itemize}
hold for a.e. $t\in[0,1]$.
\end{proposition}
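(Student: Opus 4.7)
The plan is to parallel the proof of Proposition~\ref{prop_filiformiI}, adjusted for the type II structure. The key difference is that in type II one has $[X_1,X_s]=0$ while $[X_2,X_s]=X_{s+1}$, so iterated brackets reaching the top layer $\galg_s$ can start from \emph{both} $X_2$ and $X_1$ (the latter by first stepping through $[X_2,X_1]=-X_3$). Accordingly I would invoke Proposition~\ref{prop_prop11BV} with $Y=X_1$ and $Y=X_2$, obtaining two orthogonality identities for the abnormal dual vector. For the sufficient direction, assuming (i) and (ii), I would exhibit the explicit abnormal covector $\lambda:=X_{s+1}^*-aX_s^*\neq 0$ (so $\lambda_1=\lambda_2=0$). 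Writing $U_2(t):=\int_0^t u_2(\tau)\,d\tau$, the orthogonality from $Y=X_2$ reduces to $\lambda_s\alpha_{s-2}(t)+\lambda_{s+1}\beta_{s-1}(t)=0$ for all $t$, with $\alpha_{s-2}(t)=\int_{\Sigma_{s-2}(t)}u_1(\tau_1)\cdots u_1(\tau_{s-2})\,d\tau$ and $\beta_{s-1}(t)=\int_{\Sigma_{s-1}(t)}u_1(\tau_1)\cdots u_1(\tau_{s-2})u_2(\tau_{s-1})\,d\tau$; integrating $\tau_{s-1}$ out first gives
\begin{equation*}
\beta_{s-1}(t)-a\,\alpha_{s-2}(t)=\int_{\Sigma_{s-2}(t)}u_1(\tau_1)\cdots u_1(\tau_{s-2})\bigl[U_2(\tau_{s-2})-a\bigr]d\tau,
\end{equation*}
whose integrand vanishes a.e.\ by (ii). The $Y=X_1$ identity reduces analogously, so $\lambda$ witnesses the singularity.

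Conversely, let $u$ be a singular control with abnormal covector $\lambda\in\galg^*\setminus\{0\}$; by~\eqref{eq_inv} one has $\lambda_1=\lambda_2=0$. Applying Proposition~\ref{prop_prop11BV} with $Y=X_2$ and using the relations $[X_2,X_k]=0$ for $3\le k\le s-1$ together with $[X_2,X_s]=X_{s+1}$, the iterated brackets equal $u_1(\tau_1)\cdots u_1(\tau_j)X_{j+2}$ for $j\le s-2$ and $u_1(\tau_1)\cdots u_1(\tau_{s-2})u_2(\tau_{s-1})X_{s+1}$ for $j=s-1$; orthogonality gives the identity
\begin{equation*}
A^\lambda(t):=\sum_{j=1}^{s-2}\lambda_{j+2}\,\alpha_j(t)+\lambda_{s+1}\,\beta_{s-1}(t)\equiv 0.
\end{equation*}
Repeating with $Y=X_1$, whose first iterated bracket is $-u_2(\tau_1)X_3$, produces a second identity $B^\lambda(t)\equiv 0$ of analogous shape but with an additional leading factor $u_2(\tau_1)$ inside each iterated integral. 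Introducing shifted iterates $A_k^\lambda$ satisfying the cascade $\dot A_k^\lambda=u_1\,A_{k+1}^\lambda$ for $k=0,\dots,s-3$ and terminating with $A_{s-2}^\lambda(t)=\lambda_s+\lambda_{s+1}U_2(t)$, and similarly $\dot B^\lambda=u_2\,A_1^\lambda$, the vanishing $A^\lambda\equiv B^\lambda\equiv 0$ forces $u_1A_1^\lambda=u_2A_1^\lambda=0$ a.e.

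The core step, which I expect to be the main obstacle, is propagating this vanishing up the cascade and then extracting both (i) and (ii). If $u_1\equiv 0$ a.e.\ then conditions (i) and (ii) are vacuously satisfied, so suppose $\mathscr{L}^1(\{u_1\neq 0\})>0$ and set $C_1:=\{t:u_1(t)\neq 0,\ t\text{ is a Lebesgue point of }u_1\}$, which then has positive measure. Repeating verbatim the Lebesgue-point/density-$1$ argument of Proposition~\ref{prop_filiformiI}—at a density-$1$ point $t_0\in C_1$, a nonzero $A_{k+1}^\lambda(t_0)$ would make $A_k^\lambda(t_0+h)=h\,u_1(t_0)A_{k+1}^\lambda(t_0)+o(h)\neq 0$ for small $h\neq 0$, forcing $t_0$ isolated in $C_1$, a contradiction—one propagates $A_k^\lambda=0$ on $C_1$ from $k=1$ up to $k=s-2$. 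Thus $\lambda_s+\lambda_{s+1}U_2(t)=0$ a.e.\ on $C_1$. If $\lambda_{s+1}=0$, then $\lambda_s=0$ and reading the chain backwards forces $\lambda_{s-1}=\dots=\lambda_3=0$, contradicting $\lambda\neq 0$; hence $\lambda_{s+1}\neq 0$ and $U_2\equiv a:=-\lambda_s/\lambda_{s+1}$ on $C_1$, which is condition (ii). Finally, (i) follows from absolute continuity: since $U_2$ is constant equal to $a$ on the full-measure subset $C_1$ of $\{u_1\neq 0\}$, at a.e.\ density-$1$ point that is also a differentiability point of $U_2$ one has $u_2=U_2'=0$, giving $u_1u_2=0$ a.e.\ on $[0,1]$.
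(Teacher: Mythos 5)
Your argument is correct and its skeleton coincides with the paper's: both rest on Proposition~\ref{prop_prop11BV} applied with $Y=X_1,X_2$, on the cascade of functions $A_i^\lambda$ with $\tfrac{d}{dt}A_i^\lambda=u_1A_{i+1}^\lambda$, on the Lebesgue-point/isolation argument imported from Proposition~\ref{prop_filiformiI}, and on the explicit covector $\lambda=X_{s+1}^*-aX_s^*$ for sufficiency. The one genuine divergence is in how condition (i) is obtained in the necessity direction. The paper proves (i) first and independently of (ii), by running the contradiction argument on the set $\{u_1u_2\neq0\}$: there the extra relation $\tfrac{d}{dt}A_{s-1}^\lambda=u_2\lambda_{s+1}$ lets $u_2\neq0$ propagate the vanishing one step further and kill $\lambda_{s+1}$ as well, forcing $\lambda=0$. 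You instead establish (ii) first and then deduce (i) from it, using the standard fact that an absolutely continuous function has vanishing derivative almost everywhere on each of its level sets, applied to $U_2$ on $\{U_2=a\}$, which contains $\{u_1\neq0\}$ up to a null set. Your route is slightly more economical (the necessity direction only ever uses the $Y=X_2$ cascade, and it makes explicit that (ii) in fact implies (i)), whereas the paper treats the two conditions as logically independent. Your verification of sufficiency by integrating out the innermost simplex variable and invoking the pointwise alternative ``$u_1(t)=0$ or $U_2(t)=a$'' is a harmless repackaging of the paper's argument that $A_{s-2}^\lambda,\dots,A_1^\lambda$ and the companion function $B_1^\lambda$ have vanishing derivative, hence are identically zero; both computations hinge on exactly the same cancellation.
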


Before proving Proposition~\ref{prop_filiformiII}, let us discuss its implications about the geometry of  abnormal curves in type II filiform groups.

\begin{remark}\label{rem_struttura_abnormali_filiformiII}
Let us briefly discuss the geometry of the abnormal curves associated with the singular controls described in Proposition~\ref{prop_filiformiII}. Let $u\in L^1([0,1],\galg_1)$ be  a control as in Proposition~\ref{prop_filiformiII}  and let $X:[0,1]\to\galg_1$ be a primitive of $u$. Condition (i) implies that $X$ is a concatenation of segments parallel to the coordinate axis in $\galg_1\equiv\R^2$, while condition (ii) requires that  the segments that are parallel to the first axis are all contained in the line $X_2=a$. See Figure~\ref{figuraabnII}. The abnormal curve $\gamma_u$ is uniquely determined by $X(t)$.

\begin{figure}[h]\label{figuraabnII}
	\centering
	\includegraphics[scale=1.5]{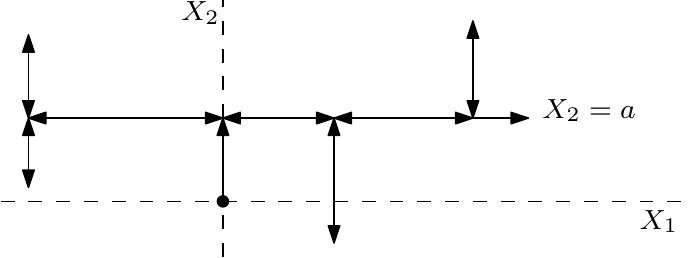}
	\caption{Example of an  abnormal curve in type II filiform groups. 
	}
\end{figure}

It is a standard task to deduce that 
\[
\abn_\G=\{\exp(a X_2)\exp(bX_1)\exp(cX_2)\mid a,b,c\in\R\}
\]
and, in particular, $\abn_\G$ is an algebraic  variety of dimension 3. In particular, $\abn_\G$ has codimension $s-2\geq3$.
\end{remark}

%

We  now prove Proposition~\ref{prop_filiformiII}.

\begin{proof}[Proof of Proposition~\ref{prop_filiformiII}]
{\em Step 1}.
Let $u\in L^1([0,1],\galg_1)$ be a singular control. Using~\eqref{eq_alt_char_im} with $Y=X_2$ and taking the bracket relations into account, Proposition~\ref{prop_prop11BV} implies that the subspace
\begin{equation}\label{eq_imfilII}
		\begin{aligned}
		\Span_{t\in [0,1]} & \left\{X_2+\int_0^t u_1(\tau_1)d\tau_1\:X_3+\iint\limits_{0\le\tau_2\le \tau_1\le t} u_1(\tau_2)u_1(\tau_1) d\tau_2 d\tau_1\:X_4\right. \\
		& \left.\quad +\dots+\idotsint\limits_{0\le\tau_{s-1}\le\dots\le \tau_1\le t}u_2(\tau_{s-1})u_1(\tau_{s-2})\dots u_1(\tau_1)d\tau_{s-1}\dots d\tau_1\:X_{s+1}\right\}
		\end{aligned}
\end{equation}
is contained in $(d_eR_{\gamma_u(1)})^{-1}(\IM d_u F)$. Since $u$ is singular, there exists $\lambda\in \mfk{g}^*$ such that $\lambda\neq 0$ and $\lambda\perp(d_eR_{\gamma_u(1)})^{-1}(\IM d_u F)$. By~\eqref{eq_inv} we have
\begin{equation}\label{eq_lambda120II}
\lambda_1=\lambda_2=0.
\end{equation}
Since $\lambda$ is orthogonal to all the elements in~\eqref{eq_imfilII} we deduce that for every $t\in [0,1]$
\begin{equation}\label{eq_covectorII}
\begin{aligned}
A_1^\lambda(t)\stackrel{\bigstar}{:=}
&\lambda_{3}\int_0^t u_1(\tau_1)d\tau_1
+\lambda_4\iint\limits_{0\le\tau_2\le \tau_1\le t} u_1(\tau_2)u_1(\tau_1)d\tau_2 d\tau_1\\
&+\dots+\lambda_{s+1}\idotsint\limits_{0\le\tau_{s-1}\le\dots\le \tau_1\le t}u_2(\tau_{s-1})u_1(\tau_{s-2})\dots u_1(\tau_1)d\tau_{s-1}\dots d\tau_1\ =\ 0.
\end{aligned}
\end{equation}
For $i=2,\dots, s-2$ we introduce $A_i^\lambda:[0,1]\to\R$ by
\begin{equation}\label{eq:functionsII}
	\begin{aligned}
			A_i^\lambda(t)&:=\lambda_{i+1}+\lambda_{i+2}\int_0^t u_1(\tau_i)d\tau_i+\dots+\lambda_{s+1}\idotsint\limits_{0\le\tau_{s-1}\le\dots\le \tau_i\le t}u_2(\tau_{s-1})\dots u_1(\tau_i)d\tau_{s-1}\dots d\tau_i,\\
			A_{s-1}^\lambda(t)&:=\lambda_s+\lambda_{s+1}\int_0^t u_2(\tau)d\tau\\
			A_s^\lambda(t)&:=\lambda_{s+1},
	\end{aligned}
\end{equation}
so that for each $i=1,\dots, s-1$
\begin{equation}\label{eq_derivateAitII}
\frac{d}{dt}A_i^\lambda(t)
=\left\{ 
\begin{array}{ll} 
u_1(t)A_{i+1}^\lambda(t),\quad &i=1,\dots, s-2,\vspace{.1cm}\\
u_2(t)A_s^\lambda(t)=u_2(t)\lambda_{s+1},\qquad &i=s-1
\end{array}\right.
\qquad\text{for a.e. }t\in [0,1].
\end{equation}

We first prove statement (i), i.e.,  that $u_1u_2=0$ a.e. on $[0,1]$. 
Assume by contradiction that $C_0:= \left\{t\in [0,1]\mid u_1(t)u_2(t)\neq0\right\}$ is such that $\mathscr{L}^1(C_0)>0$; then also 
\[
	C_1:=\left\{ t\in C_0\mid t \text{ is a Lebesgue point of } u\right\}
\]
satisfies $\mathscr{L}^1(C_1)>0$. Again, $C_1$ does not contain any isolated point and, arguing as in the proof of Proposition~\ref{prop_filiformiI}, we deduce that $A_1^\lambda(t)=\dots=A_s^\lambda(t)=0$ for almost every $t\in C_1$. This  gives
\[
	\lambda_{s+1}=\lambda_{s}=\dots=\lambda_{3}=0,
\]
contradicting the fact that $\lambda$ is non-zero.

Next, we prove that, if $\lambda_{s+1}=0$, then necessarily $u_1=0$ a.e. on $[0,1]$, so that also statement (ii) holds. Assume on the contrary that the set $D_0:=\{t\in [0,1]\mid u_1(t)\ne 0\}$ has positive measure; then also the set $D_1\subset D_0$ of Lebesgue points of $u$ has positive measure and (using in a crucial way the assumption $\lambda_{s+1}=0$) one can reason as before to deduce that $\lambda=0$, contradiction.

Eventually, we consider the case $\lambda_{s+1}\neq 0$; we can also assume that the sets $D_0$ and $D_1$ introduced in the previous paragraph have positive measure, otherwise we have again $u_1=0$ a.e. on $[0,1]$.
Differentiating~\eqref{eq_covectorII} and using~\eqref{eq_derivateAitII} we deduce that there exists $D_2\subset D_1$ such that $\mathscr L^1(D_1\setminus D_2)=\mathscr L^1(D_0\setminus D_2)=0$ and
\begin{equation}\label{eq_puntidiD1II}
A_1^\lambda(t)=A_2^\lambda(t)=\dots=A_{s-1}^\lambda(t)=0\qquad\forall\; t\in D_2.
\end{equation}
Therefore
\[
0=A_{s-1}^\lambda(t)=\lambda_{s}+\lambda_{s+1}\int_0^{t}u_2(\tau)d\tau\qquad\forall\; t\in D_2,
\]
and we get that necessarily
\[
\int_0^{t}u_2(\tau)d\tau
=
-\frac{\lambda_{s}}{\lambda_{s+1}}
=:
a\in \R\qquad\forall\; t\in D_2.
\]
Since $\mathscr L^1(D_0\setminus D_2)=0$, statement (ii) is proved for a.e. $t$.

{\em Step 2.}
Conversely, let $a\in\R$ and a control $u\in L^1([0,1],\galg_1)$ be fixed so that statements (i) and (ii) hold for a.e. $t\in [0,1]$. Let $\lambda\in\galg^*$ be defined by
\begin{equation}\label{eq_deflambda}
\lambda_1=\dots=\lambda_{s-1}=0,\qquad\lambda_s=-a,\qquad\lambda_{s+1}=1;
\end{equation}
we consider the functions $A_1^\lambda,\dots, A_s^\lambda:[0,1]\to\R$ defined by $\bigstar$ in~\eqref{eq_covectorII} and by~\eqref{eq:functionsII}, and we introduce $B_1^\lambda:[0,1]\to\R$ defined by
\[
\begin{aligned}
B_1^\lambda(t):=
&\lambda_{s}\idotsint\limits_{0\le\tau_{s-2}\le\dots\le \tau_1\le t}
u_1(\tau_{s-2})\dots u_1(\tau_{2})u_2(\tau_1)d\tau_{s-2}\dots d\tau_1
\\
&+\lambda_{s+1}\idotsint\limits_{0\le\tau_{s-1}\le\dots\le \tau_1\le t}u_1(\tau_{s-1})\dots u_1(\tau_{2}) u_2(\tau_1)d\tau_{s-1}\dots d\tau_1.
\end{aligned}
\]
Let $Y=x_1X_1+x_2X_2\in\galg_1$ and $t\in[0,1]$ be fixed.
Taking also~\eqref{eq_deflambda} into account, we see that that pairing of $\lambda$ with the vector
\[
Y+\sum_{j=1}^{s-1}\int_{\Sigma_j(t)}\left(\mathrm{ad}X_{u(\tau_{j})}\circ\dots\circ  \mathrm{ad}X_{u(\tau_1)}  \right)Y\;d\tau_{j}\dots d\tau_1 
\]
is equal to $x_1A_1^\lambda(t)+x_2B_1^\lambda(t)$. We claim that
\begin{equation}\label{eq_A1B1nulli}
A_1^\lambda=B_1^\lambda=0\qquad\text{on }[0,1];
\end{equation}
thanks to Proposition~\ref{prop_prop11BV}, this implies that $\lambda$ is orthogonal to $(d_eR_{\gamma_u(1)})^{-1}(\IM d_u F)$, which is therefore a proper subspace of $\galg$ ensuring  that  $u$ is a singular control.

Let us prove~\eqref{eq_A1B1nulli}. We observe that~\eqref{eq_derivateAitII} still hold, while~\eqref{eq_deflambda} together with statement  (ii) implies that for a.e. $t\in[0,1]$
\[
\text{either}\qquad
u_1(t)=0\qquad
\text{or}\qquad
A_{s-1}^\lambda(t)=-a+\int_0^tu_2(\tau)d\tau=0.
\]
By~\eqref{eq_derivateAitII} this gives $\frac{d}{dt}A_{s-2}^\lambda=0$, hence $A_{s-2}^\lambda$ is constant and $A_{s-2}^\lambda=A_{s-2}^\lambda(0)=\lambda_{s-1}=0$. This argument can be repeated to prove that $A_{s-3}^\lambda,\dots,A_1^\lambda$ are constant, and actually identically zero due to~\eqref{eq_deflambda}. Eventually, we observe that $B_1^\lambda(0)=0$ and $\frac{d}{dt}B_{1}^\lambda=u_2A_2^\lambda=0$ a.e. on $[0,1]$, so also $B_1^\lambda$ is identically zero in $[0,1]$. Our claim~\eqref{eq_A1B1nulli} is proved and the proof is concluded.
\end{proof}

\begin{remark}
It is worth noticing that, as soon as the singular control $u$ is not such that $u_1=0$ a.e. on $[0,1]$, then the covector $\lambda$ associated with  $u$ (as in Step 1 of the proof of Proposition~\ref{prop_filiformiII}) satisfies
\[
\lambda_1=\lambda_2=\dots=\lambda_{s-1}=0.
\]
Indeed the set $D_2$ introduced in the proof of Proposition~\ref{prop_filiformiII} has positive measure and~\eqref{eq_puntidiD1II} holds; we can fix a sequence of points $(t_k)_{k\in \N}\subset D_2$ converging to $t_0:=\inf D_2$.
Since the functions $A_{i}^\lambda$ are continuous, the equalities~\eqref{eq_puntidiD1II} hold for $t=t_0$ as well and, using the fact that $u_1=0$ a.e. in $ [0,t_0)$ we obtain
\[
0=A_{s-2}^\lambda(t_0)=\lambda_{s-1}.
\]
Inductively, this argument can be used to show that
\[
	\lambda_{s-1}=\dots=\lambda_{3}=0,
\]
which is enough to conclude thanks to~\eqref{eq_lambda120II}.
\end{remark}

\bibliographystyle{acm}
\bibliography{biblio}

\end{document}